\pgfplotsset{compat=1.13}
\newtheorem{assumption}{Assumption}
\newtheorem{remark}{Remark}
\newtheorem{definition}{Definition}
\newtheorem{lemma}{Lemma}
\newtheorem{example}{Example}
\newtheorem{theorem}{Theorem}
\newtheorem*{lemma*}{Lemma}
\newtheorem{corollary}{Corollary}
\newcommand{\fl}{f^{\ell}}
\newcommand{\fsl}{f^{S_k^\ell}}
\newcommand{\vlmone}{v_k^{\ell-1}}
\newcommand{\xkl}{x_k^\ell}
\newcommand{\skl}{s_k^\ell}
\newcommand{\E}{\mathbb{E}}
\newcommand{\gk}{g_k}
\newcommand{\gradf}{\nabla_x f(x_k)}
\newcommand{\name}{MU$^\ell$STREG}
\newcommand{\nameuno}{MU$^1$STREG}
\newcommand{\namedue}{MU$^2$STREG}
\newcommand{\nametre}{MU$^3$STREG}
\newcommand{\norm}[1]{\left\Vert #1 \right\Vert}
\newcommand{\graffe}[1]{\left\lbrace #1 \right\rbrace}
\newcommand{\quadre}[1]{\left[ #1 \right]}
\newcommand{\tonde}[1]{\left( #1 \right)}
\newcommand{\Sub}{\mathcal{S}} 
\newcommand{\R}{\mathbb{R}}
\newcommand{\mush}{\texttt{MUSH} }
\newcommand{\mnist}{\texttt{MNIST} }
\newcommand{\aninea}{\texttt{A9A} }
\newcommand{\ijcnnone}{\texttt{IJCNN1} }
\newcommand{\fine}{\phi}
\newcommand{\coarse}{\phi}
\newcommand{\regc}{\lambda_k\|\gk\|}
\begin{document}
\title{A  multilevel stochastic regularized first-order method \\ with application to finite sum minimization}


\author{Filippo Marini\thanks{Dipartimento di Ingegneria Industriale, Universit\`a degli Studi di Firenze, Viale Morgagni 40/44, 50134 Firenze, Italia.
Email: {\tt filippo.marini@unifi.it}}\and{\,}
 Margherita Porcelli$^{\dagger,}$\thanks{Dipartimento di Ingegneria Industriale, Universit\`a degli Studi di Firenze, Viale Morgagni 40/44, 50134 Firenze, Italia. Member of the INdAM Research Group GNCS. Email: {\tt margherita.porcelli@unifi.it}}\thanks{ISTI--CNR, Via Moruzzi 1, Pisa, Italia}
 \and Elisa Riccietti\thanks{ENS de Lyon, Univ Lyon, UCBL, CNRS, Inria, LIP, F-69342, LYON Cedex 07, France. Email: {\tt elisa.riccietti@ens-lyon.fr}} }

\maketitle

\begin{abstract}
In this paper, we propose a multilevel stochastic framework for the solution of nonconvex unconstrained optimization problems. The proposed approach uses random regularized first-order models that
exploit an available hierarchical description of the problem, being either in the classical
variable space or in the function space, meaning that different levels of accuracy for the objective function are available. We propose a convergence analysis  showing an almost sure global convergence of the method to a first order stationary point.
The numerical behavior is tested on the solution of finite sum minimization problems.
Differently from classical deterministic multilevel schemes, our stochastic method does not require the finest approximation to coincide with the original objective function along all the optimization process.  This allows for significantly decreasing their cost, for instance in data-fitting problems, where considering all the data at each iteration can be avoided.

\end{abstract}

\section{Introduction}\label{sec_method}

Many modern applications require the solution of large scale stochastic optimization problems, i.e., 
problems of the form:
\begin{equation}\label{pb_opti}
\min_{x \in  \mathbb{R}^n} f(x),
\end{equation}
where $f$ is a function that is assumed to be smooth and bounded from below, whose value can only be computed with some noise \cite{alarie2021two}.
When considering this problem, it is usually assumed that  random realizations of $f$ are computable, with variable accuracy \cite{bergou22,storm}. While this framework is borrowed from
derivative free optimization, it applies to derivative-based optimization as well, cf. \cite{storm}.
 Expected risk  minimization  and problems in which the objective function is the outcome of a stochastic simulation are examples that fit this framework \cite{bottou_review,storm}. 

Methods to address \eqref{pb_opti} usually rely on the assumption that the accuracy of the function approximations grows asymptotically, which leads to iterations that are more and more expensive \cite{bergou22,storm}. Moreover, such methods control the accuracy of the function estimates but not the size of the variables.   
An important challenge in this context is thus to develop scalable stochastic methods, able to handle the increasing costs of large scale stochastic optimization problems. 

In classical scientific computing, \textit{multilevel methods} represent powerful techniques that have been developed to cope with structured large scale optimization problems where the limiting factor is the number of variables. When the structure of the problem at hand allows for a hierarchical description of the problem itself,  these methods reduce the cost of the problem's solution by computing cheap steps by exploiting function approximations defined on subspaces of progressively smaller dimension. Thanks to this, they achieve not only considerable speed-ups but also an improved quality solution in various  applications, spanning from the solution of partial differential equations to image reconstruction \cite{SGratton_ASartenaer_PhLToint_2008,eusipco,lauga2024iml,nash2000}.

Existing multilevel methods are limited to a \textit{deterministic} context and thus are unsuitable to address stochastic optimization problems. Moreover, they have always been used to exploit  hierarchies in the variables space, such as discretizations of  infinite dimensional problems on selected grids. However, in many modern applications the limiting factor can be the accuracy of the function estimates rather than the size of the model. 

 In this work we propose an extension of multilevel methods to a stochastic setting. 
More precisely, we assume to have access to a hierarchy of  randomly chosen computable representations of $f$, built either by reducing the dimension in the variables space or by reducing the noise of the function approximation, or both. 
Our multilevel adaptation decreases the cost of the iterative solution by coupling standard ``fine" stochastic steps with ``coarse"  steps, which are less expensive and keep a low cost for all the iterative process. Such steps can be built either:
\begin{itemize}
\item By reducing the dimension of the problem in the variables space, as in classical multilevel schemes, but possibly in a stochastic way.
\item Using low accuracy approximations of the objective function even in later steps of the optimization process.  
\end{itemize}
A level $\ell$ will correspond to a subset of variables and to a noise level in the function approximation. The fine steps will still be computed considering all the variables and increasingly accurate function approximations, while the coarse steps will be computed by taking into account just small subsets of variables and/or inaccurate function approximations. 
While the fine iterations will still become more and more expensive, as in standard stochastic methods, their number will be reduced, and progress and speed will be ensured by the coarse iterations. 

Our multilevel framework thus extends classical multilevel schemes by allowing  for the solution of stochastic problems such as \eqref{pb_opti}, with hierarchies built also in the function space. Differently from the classical setting, the steps are stochastic, as well as the generated sequence of iterations. 
 The stochasticity of the framework allows one to mitigate one of the main limitations of deterministic multilevel methods: the need of periodically handling the full original problem at fine scale, which limits the size of affordable problems. In this context indeed, as it is the case for classical stochastic methods \cite{bergou22,blanchet2019convergence,byrd2012sample,storm}, this will be necessary only towards the end of the optimization procedure.

Inspired by the stochastic trust-region framework STORM in \cite{storm},  we propose a stochastic multilevel Adaptive Regularization (AR) technique\footnote{We choose to employ AR techniques rather than trust-region techniques as they are easier to adapt to a multilevel context, cf. \cite{calandra2021high}.} named \name{} for MUltilevel STochastic REegularized Gradient method, where $\ell$ refers to the number of levels in the hierarchical description of the problem. As in standard 
 AR techniques, the automatic step selection choice is made possible at each level by the use of models regularized by an adaptive regularization parameter, but the fine level models are stochastic. As for STORM, we prove that our method converges to a first order stationary point as long as 
 the local models of the objective function at the finest level are fully linear and the function estimates are sufficiently accurate, both with sufficiently high probability. 
The proposed analysis extends that proposed in \cite{storm} to adaptive regularization methods while including also the multilevel steps.

 From a practical point of view, we test the multilevel paradigm on the solution of \textit{finite sum  minimization problems}. 
Such problems have their origin in large scale data analysis applications where models depending on a large number of parameters $n$ are fitted to a large set of $N$ samples, thus  either $n$ or $N$ (or both) are really large, see e.g. \cite{bollapragada2018adaptive, bottou_review}.
Several methods have been developed to cope with the large sizes of the datasets. In particular, optimization techniques based on subsampling techniques have been proposed, among them the numerous variations of classical stochastic gradient descent (SGD), see e.g. \cite{bellavia1,bollapragada2018adaptive, bottou_review,saga,johnson2013accelerating} and references therein.

When considering expected risk minimization problems, there is a natural way of building a hierarchy of computable function approximations, through the definition of nested subsample sets and of finite sum minimization problems. 
A multilevel method in this context alternates ``fine steps", i.e., steps computed considering increasingly large  subsets of data and ``coarse steps" computed taking into account just small subsets of data (mini-batches). In this context they can be viewed as a way for accelerating adaptive sampling strategies. Moreover, 
the coarse steps are computed by minimizing a model that is built from the coarse level approximations (finite sum problems defined on mini-batches) by adding a correction term, usually known as ``first order coherence" in the multilevel literature, which (in this context) accounts for the discrepancy between the fine gradient and the coarse one. When the full gradient is used at fine level, this is reminiscent of the term added in the reduced variance gradient estimate of the mini-batch version of SVRG \cite{johnson2013accelerating}. 
 Multilevel methods in this case can thus also be interpreted as variance reduction methods, cf. \cite{alena}, with the advantage of allowing for an automatic choice of the step size.

\paragraph{Contributions}
\begin{itemize}
   \item This is the first stochastic framework for multilevel methods, that are currently limited to the deterministic case.
   \item The multilevel framework allows for hierarchies in the function space, i.e., built by considering function approximations with variable accuracy. 
   
  \item The stochastic multilevel framework mitigates  the limiting factor of classical deterministic multilevel methods, whose convergence theory requires the fine level function to coincide with the original target function at every iteration.
  
  \item Our multilevel framework, and thus the stochastic analysis, also covers the classical one-level case.
  
   \item The multilevel method offers a strategy to accelerate methods for stochastic optimization such as STORM. 
\end{itemize}

\paragraph{Related work}

\textit{Multilevel methods.} As a natural extension of multigrid methods \cite{briggs2000multigrid} to a nonlinear context, multilevel methods were first proposed by Nash through the MG/OPT framework \cite{nash2000} and later extended to trust region schemes \cite{SGratton_ASartenaer_PhLToint_2008}. Recently these methods have been extended to other contexts: high-order models \cite{calandra2021high}, non-smooth optimization \cite{lauga2024iml,parpas}, machine learning \cite{mOFFO,bcd,Kopanicakova_2021g}.  A multilevel method that exploits hierarchies in the function space has been explored in \cite{alena}, for deterministic finite sum convex problems leveraging the multilevel scheme of MG/OPT developed in \cite{nash2000}. 
Recent research \cite{mOFFO} proposes a (deterministic) multilevel version of the OFFO method that does not require function evaluations and that is based on the classical multilevel scheme constructed on the variable space.

\textit{Derivative free optimization} An idea close to that of multilevel methods to alternate between accurate steps and cheap steps using more or less information can be found in full-low evaluation derivative free optimization for direct search methods
 \cite{full_low,full_low2}. Another technique that has been considered to reduce the cost of DFO problems is random subset selection \cite{coralia_subset}. 
 
 \textit{Stochastic regularization methods}
 In \cite{bergou22} the authors propose a Levenberg-Marquardt adaptation of the STORM framework for
 noisy least squares problems. As in our work, the step size in this context is updated through a regularization parameter. We inherited from this work the dependence of the regularization parameter from the norm of the gradient (cf. \eqref{clem} below) and the definition of accurate models (cf. Definition \ref{def:fullylin}). 
The recent literature on variants of the standard trust-region method based on the use of random models is very extensive, we refer to \cite{bandeira2014,bellavia2,bellavia1,sara2024,rinaldi2024} to name a few and references therein. In addition,  stochastic adaptive second order regularization methods are considered in \cite{jin2025sample,scheinberg2023stochastic}, while higher order models are
considered in \cite{bellavia2022adaptive}.

\paragraph{Organization of the paper} In section 
\ref{sec:2level} we introduce our \name{} method and we propose its convergence analysis in section \ref{sec_conv}. In section \ref{sec:multilevel} we specialize the \name{} framework to 
finite sum minimization and we analyze its numerical performance in section \ref{sec:num}.
We draw some conclusions and present some perspectives in section \ref{sec_concl}.


\section{The multilevel stochastic regularized gradient method}\label{sec:2level}

In this section we describe our new MUltilevel STochastic REegularized Gradient method (MU$^{\ell}$\-STREG)\footnote{The $\ell$ denotes the number of levels in the hierarchical problem description.} for the solution of problem (\ref{pb_opti}).

\paragraph{Hierarchical representation of problem \eqref{pb_opti}}
We assume to have at disposal, at each iteration $k$,  a fine level computable  approximation $f_k$ of $f$ and  a hierarchy of coarse computable 
approximations, $\{\fine_k^{\ell}\}_{\ell=1}^{\ell_{\max}-1}$, where $1$ corresponds to the coarsest level. 
More precisely, we assume that, for each $k$,  $\fine_k^{\ell}$ is less costly to compute than $\fine_k^{\ell+1}$ and that $\fine_k^{\ell_{\max}-1}$ is less costly to compute than $f_k$, either because  it
is less accurate  and/or because it is defined on a smaller dimensional space. 
We also assume that the functions $\phi_k^\ell$ are randomly chosen at iteration $k$, but once they have been defined, they are deterministic functions. 
As in classical multilevel methods, we assume to have at disposal some transfer operators $R^{\ell}_k$ (restriction) and $P^{\ell}_k$ (prolongation) to transfer the information (variables and gradients) from level $\ell$ to level $\ell-1$ and vice-versa, such that $R^{\ell}_k=\nu_k (P^{\ell}_k)^T$ for some $\nu_k>0$ \cite{briggs2000multigrid}. 
Unlike the classical framework, such operators can  be random and often vary from one iteration $k$ to another.  If the hierarchy is built just in the functions space all the variables will have the same dimension and the transfer operators will thus just be the identity. We present here some problems that fit this framework.  In the following we will refer to the fine level with the index $\ell_{\max}$.

\begin{example}\emph{Expected risk minimization problems} \cite{bottou_review}
 Assume to have two sets $\mathcal{Z},\mathcal{Y}$, a loss function $\mathcal{L}:\mathcal{Z}\times\mathcal{Y}\rightarrow\mathbb{R}$, and a probability distribution $P$ on $\mathcal{Z}\times\mathcal{Y}$. The expected risk minimization problem is
\begin{equation}\label{pb_expected_risk}
     \min_{x \in \mathbb{R}^n} \mathbb{E}_P[\mathcal{L}(y,m_x(z))]
\end{equation}
 for a given parametric model $m_x:\mathcal{Z}\rightarrow \mathcal{Y}$. Given a data set $\{(z_i,y_i)\}$, $i = 1,\dots, N$, drawn from the distribution $P(z, y)$, the fine approximation at iteration $k$ can be defined as the averaged sum of the the functions $f^{(i)}(x) := \mathcal{L}(y_i,m_x(z_i))$, that is as 
$$
\frac{1}{|\mathcal{S}_k^{\ell_{\max}}|}\sum_{i\in \mathcal{S}_k^{\ell_{\max}}}f^{(i)}(x) \quad x\in\mathbb{R}^n,
$$
over increasingly larger subsets
$\mathcal{S}_k^{\ell_{\max}}\subseteq\{1,\dots,N\}$, such that $\mathcal{S}_{k}^{\ell_{\max}}\subseteq\mathcal{S}_{k+1}^{\ell_{\max}}$ (cf. section \ref{sec:multilevel}).
The hierarchy of functions approximations at coarse levels can then be built in two different ways:

\begin{enumerate}
    \item \emph{ Hierarchy in the samples space} The coarse approximations can be defined, at iteration $k$,  as the averaged sum of the $f^{(i)}$ over 
nested subsets of $\mathcal{S}_k^{\ell_{\max}}$, that is $\fine_k^\ell:= \fsl$ where:
$$
\fsl(x)=\frac{1}{|\mathcal{S}_k^\ell|}\sum_{i\in \mathcal{S}_k^\ell}f^{(i)}(x) \quad x\in\mathbb{R}^n,
$$
for $\mathcal{S}_k^\ell\subseteq\mathcal{S}_k^{\ell+1}\subseteq \mathcal{S}_k^{\ell_{\max}}$ for all $\ell$ and for all $k$.

\item \emph{Hierarchy on the variables}
The coarse approximations can still be defined as the averaged sum of the $f^{(i)}$ over $\mathcal{S}_k^{\ell_{\max}}$, but considering a (possibly random) subset of the variables: 
$$
\phi_k^\ell(\bar x)=\frac{1}{|\mathcal{S}_k^{\ell_{\max}}|}\sum_{i\in \mathcal{S}_k^{\ell_{\max}}}f^{(i)}(\bar x) \quad \bar x\in\mathbb{R}^{n_k^\ell}
$$
with $n^\ell_k\leq n_k^{\ell+1}\leq n$ for all $\ell$ and for all $k$.

\end{enumerate}
Notice that the sets $\mathcal{S}_k^\ell$ are drawn randomly, but once they are drawn, the function approximations are deterministic. 

\end{example}

\begin{example}\emph{Montecarlo simulations} \cite{giles2008multilevel}
Let $\xi$ be a random variable defined on the probability space $(\Omega,P)$. 
Let $f(x)=\mathbb{E}(F(x,\xi))$, with $F:\mathbb{R}^n\times \Omega\rightarrow \mathbb{R}$. $F(x,\xi)$
can represent for instance the fit of the solution of a stochastic PDE to some data. $F$ can be approximated at fine level by the stochastic  approximations 
$F_{h_k}(x,\xi)$, the output of a PDE solver with mesh size $h_k>0$ and  
$f$ can then be approximated by the Montecarlo estimator 
$$
\hat{f}_{N_k,h_k}(x)=\frac{1}{N_k} \sum_{i=1}^{N_k} F_{h_k}(x,\xi_i)
$$
   where $\xi_1,\dots,\xi_{N_k}$ are iid samples drawn from $P$. 
   The coarse approximations can be built either: 
   \begin{enumerate}
       \item \emph{In the function space}, by defining 
       $$
\phi^{\ell}_k(x)=\frac{1}{N_k^\ell} \sum_{i=1}^{N_k^\ell} F_{h^\ell_k}(x,\xi_i)
$$
    with $h_k^\ell\geq h_k^{\ell+1}\geq h_k$ and/or $N_k^{\ell}\leq N_k^{\ell+1}\leq N_k$ for al $\ell$ and for all $k$.    
       \item \emph{In the variables space} by defining 
       $$
\phi^{\ell}_k(\bar{x})=\frac{1}{N_k} \sum_{i=1}^{N_k} F_{h_k}(\bar{x},\xi_i), \quad \bar{x}\in\mathbb{R}^{n_k^\ell}
$$
with $n^\ell_k\leq n_k^{\ell+1}\leq n$ for all $\ell$ and for all $k$.
   \end{enumerate}
\end{example} 

\paragraph{The step computation}
For any level $\ell$ and at each iteration $k$, our multilevel gradient method can choose between two different types of stochastic steps: a gradient step, which is known as the \textit{fine step}, or a \textit{coarse step} computed by exploiting the approximations of $f$.
Notice that,  differently from classical deterministic multilevel schemes, the steps are all stochastic. 

At the finest level, if the fine step is chosen, a model $m_k(x_k + s) := f_k+g_k^Ts$ approximating $f$ around the current iterate $x_k$ is built, where $f_k$ and $g_k$ are approximations of $f(x_k)$ and $\nabla_x f(x_k)$, respectively,   and a classical stochastic gradient step $s_k=-\frac{g_k}{\lambda_k \|g_k\|}$ is taken\footnote{This amounts to minimize $m_k(x_k+s)+\frac{\lambda_k\|g_k\|}{2}\|s\|^2$ wrt $s$. Notice that the stepsize depends on the norm of the gradient as in \cite{bergou22}, cf. discussion in \cite[section 3.1]{bergou22}. } for some $\lambda_k >0$, with $g_k$ a realization of $\nabla_x f(x_k)$.   
   
If the coarse step is chosen at iteration $k$,  the step is found by recursively minimizing a sequence of regularized models  $m_{k}^{R,\ell}$, for $\ell=\ell_{\max}-1,\dots,1$, built exploiting the random approximations $\{\fine_k^{\ell}\}_{\ell=1}^{\ell_{\max}-1}$ of $f$ and are thus either defined in a lower dimensional space, or
employs inaccurate function approximations, or both.
More precisely, at each level $\ell>1$ (including the finest level $\ell_{\max}$) a coarse model is defined for the immediately coarser level, which will serve as an objective function for level $\ell-1$ when the algorithm is called recursively. 
Starting at the fine level and  
considering the highest coarse approximation in the hierarchy $\fine_k^{\ell_{\max}-1}$,  at iteration $k$ we define 
\begin{align*}
\varphi_k^{\ell_{\max}-1}(s)&=\fine_k^{\ell_{\max}-1}(R_k^{\ell_{\max}-1} x_k+s)\\
&+(R_k^{\ell_{\max}-1} g_k-\nabla_s \fine_k^{\ell_{\max}-1}(R_k^{\ell_{\max}-1}x_k))^Ts,
\end{align*}
i.e., $\varphi_k^{\ell_{\max}-1}$ is a modification of the coarse function $\fine_k^{\ell_{\max}-1}$ through the addition of a correction term. This correction aims to enforce the following relation:
\begin{equation*}
\nabla_s \varphi_k^{\ell_{\max}-1}(0)= R_k^{\ell_{\max}-1} g_k,
\end{equation*}
which ensures that the behaviour of the coarse model is coherent with the fine objective function approximation, up to order one. The step is defined as an approximate minimizer of the regularized model 
\begin{equation}\label{clem}
	m^{R,\ell_{\max}-1}_k(s)=\varphi_{k}^{\ell_{\max}-1}(s)+ \frac{\lambda_k^{\ell_{\max}-1}\|g_k\|}{2}\|s\|^{2},
	\end{equation}
   with $\lambda_k^{\ell_{\max}-1}>0$,
by calling the multilevel procedure in a recursive way. At the first level of the recursive call, the algorithm will take as an objective function $m^{R,\ell_{\max}-1}_k$, and take either a gradient step for $m^{R,\ell_{\max}-1}_k$ or a coarse step. To define the coarse step,  a coarse model for $m^{R,\ell_{\max}-1}_k(s)$ is built, which involves the approximation $\fine_k^{\ell_{\max}-2}$, the restriction of the correction vector and the regularization term from level $\ell_{\max}-1$ and a new correction term and a new regularization for level $\ell_{\max}-2$. The procedure is repeated until the coarsest level is reached, which is minimized directly without further recursion. Once a coarse step $s^*$ is found at the end of the recursive procedure, we set $s_k=P_k^{\ell_{\max}}s^*$.

At a generic level $\ell$, the recursive call is stopped as soon as  a step $s_k^{\ell-1}$ is found that satisfies the following conditions:
\begin{align}\label{stop_ell}
m_k^{R,\ell-1}(s_k^{\ell-1}) <m_k^{R,\ell-1}(0),\quad 
	\left\|\nabla_s m_k^{R,\ell-1}(s_k^{\ell-1})\right\| \leq \epsilon^{\ell-1} \|s_k^{\ell-1}\|,
 \end{align}
for some $\epsilon^{\ell-1}>0$, and we set $s_k^\ell:=P_k^\ell s_k^{\ell-1}$.
As we will see, these conditions will ensure the convergence of the multilevel method in the spirit of the Adaptive-Regularization algorithm with a first-order model described e.g., in \cite[Sec. 2.4.1]{book_compl}.
Note that even if we use a first order model at fine level, we could use a higher order method to minimize the lower level models. 

In order to be meaningful, the coarse steps are restricted to iterations such that 
\begin{equation*}
  \| R_k^\ell g^\ell_k\|\geq \kappa^\ell \|g^{\ell}_k\|
\end{equation*}
with $g^\ell_k$ a realization of the gradient of the objective function at level $\ell$, and  for $\kappa^\ell\in(0,\min_k\min\{1,\|R_k^\ell\|\})$ \cite{SGratton_ASartenaer_PhLToint_2008}.

This framework is flexible and encompasses several actual implementations: at each iteration $k$ one needs to choose whether to employ the fine or the coarse step. A sketch of a possible \name{} cycle of iterations is depicted in Figure \ref{fig:schema}.

\begin{figure}[h]
\centering
\includegraphics[width=0.8\textwidth]{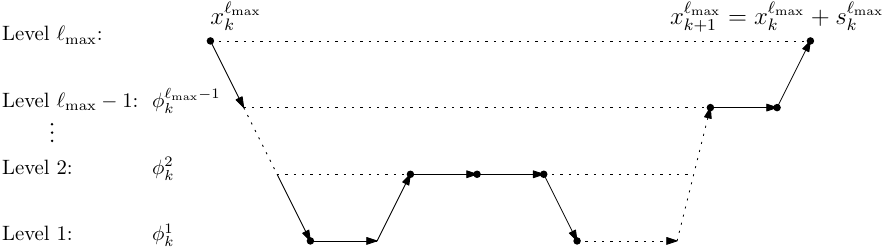}
\caption{Sketch of a possible iteration scheme for \name{}. Horizontal arrows represent fine steps.} 
 \label{fig:schema}
\end{figure}

 \paragraph{The step acceptance}
 The step $s_k^\ell$ obtained at each level is used to define a trial point $x_k^\ell+s_k^\ell$
 and an estimate $f_k^{\ell,s}$ of $m_k^{R,\ell}(\xkl+\skl)$.
 The step acceptance is based on the ratio of the achieved reduction over the predicted reduction:
\begin{equation}\label{rho}
\rho_k=\frac{f_k^{\ell}-f_k^{\ell,s}}{m_{k}^\ell(0)-m_{k}^\ell(\skl)},
\end{equation}
where $f_k^{\ell}$ is an estimate of  $m_k^{R,\ell}(\xkl)$, and for the fine step  $m_{k}^{\ell}-m_{k}^{\ell,s}=-(g_k^\ell)^Ts_k^\ell$ with $g_k^\ell$ an estimate of $\nabla_s m_k^{R,\ell}(\xkl)$ and for the coarse step $m_{k}^{\ell}-m_{k}^{\ell,s}=\varphi_k^{\ell-1}(0)-\varphi_k^{\ell-1}(s_k^{\ell-1})$.   
 A \textit{successful} iteration is declared if the model is accurate, i.e., $\rho_k$ is larger than or equal to a chosen threshold $\eta_1\in(0,1)$
and $\|g_k^\ell\|\geq \frac{\eta_2}{\lambda_k^\ell}$ for some $\eta_2 >0$; otherwise the iteration is declared \textit{unsuccessful} and the step is rejected. The test for the step acceptance is combined with the update
of the regularization parameter $\lambda_k^\ell$ for the next iteration. The update is still based on the ratio \eqref{rho}. If the step is successful, the regularization parameter is decreased, otherwise it is increased.

The full multilevel procedure with $\ell$ levels, specialized for finite sum minimization problems, is described in Algorithm \ref{alg2: STREG} and will be introduced in section \ref{sec:multilevel}.  In the following section, for sake of simplicity, we detail the procedure in the two-level case.

\subsection{\namedue{}: the two-level case }\label{sec:2leva}
 We assume here that we have just two levels, i.e., just a coarse approximation to $f$. We therefore omit the superscript $\ell$ and we denote by $\coarse_k$ the coarse approximation at iteration $k$. Moreover, let $n_c\leq n$ be the dimension of the coarse space,  and let $R_k$ and $P_k$ be the grid operators.	We sketch the \name{} procedure for $\ell=2$ in Algorithm \ref{alg:STGM} where we rename it as \namedue. In the following, we collect the main assumptions on the algorithmic steps that will be used in the convergence analysis in the next section.

\begin{assumption}\label{hp:passi}
  At each iteration $k$ of Algorithm \ref{alg:STGM}, given $f_k\in\mathbb{R}$ and $g_k\in\mathbb{R}^n$, approximations to $f(x_k)$ and $\nabla_x f(x_k)$ respectively,  let the step $s_k\in\mathbb{R}^{n}$ be computed so that either:
\begin{align}
&s_k=-\frac{\gk}{\lambda_k\|\gk\|}, & \text{(fine step)} \text{ or }\\
& s_k=P_k s^*, \; s^*\in\mathbb{R}^{n_c}, & \text{(coarse step)}
\end{align}
with $s^*$ satisfying
\begin{equation}\label{stopping}
     m_k^{R}(s^*)\leq m_k^{R}(0) \ \text{ and }\ \|\nabla_s m_k^{R}(s^*)\|
          \leq \theta \|s^*\|, \; \theta>0  
 \end{equation}
where, for $s\in\mathbb{R}^{n_c}$
\begin{subequations}\label{reg_taylor_model}
    \begin{align}
\varphi_k(s)&:=  \coarse_k(R_kx_k+s)+(R_k\gk-\nabla_s\coarse_k(R_kx_k))^Ts,\\
	m_k^{R}(s)&:= \varphi_k(s)+
	  \frac{\lambda_k\|\gk\|}{2}\|s\|^{2}.
    \end{align}
    \end{subequations}
    The definition of the coarse model ensures  that
\begin{equation}\label{coherence}
\nabla_s\varphi_k(0)=R_k\gk
\end{equation}
and that, when the coarse model is used, it holds:
\begin{equation}\label{decr_varphi}
  \varphi_k(s^*)-\varphi_k(0)\leq -\frac{1}{2}\regc\|s^*\|^2.
\end{equation}
The use of the coarse step is restricted to iterations $k$ such that 
    \begin{equation}\label{go_down_condition}
    \| R_k \gk\|\geq \kappa_H \|\gk\|
\end{equation}
for $\kappa_H\in(0,\min_k\min\{1,\|R_k\|\})$. We assume that $R_k=\nu_k P_k^T$ with 
 $\nu_k= 1$ for all $k$, without loss of generality, and that, for all $k$, $\|R_k\|\leq \kappa_R$ with $\kappa_R>0$. 
 This in particular, from \eqref{coherence}, ensures that, if $s_k=P_ks^*$ then 
 \begin{equation}\label{eq:good_direction}
     \nabla_s\varphi_k(0)^Ts^*=\gk^Ts_k.
 \end{equation}

\end{assumption}

\begin{algorithm}{}
  \caption{\namedue$(x_{0},\lambda_{0})$ two-levels stochastic regularized gradient method }\label{alg:STGM} 
  	\begin{algorithmic}[1]
  \State \label{init} $\bullet$ {\bf Initialization:} Choose $x_{0} \in \mathbb{R}^{n}$ and $\lambda_{0} >0$. Set the constants 	$\eta_1\in (0,1)$, $\eta_2 > 0$ and $\gamma \in (0,1)$. Set $k=0$. 
   \State \label{init2} $\bullet$ {\bf Operators and functions computation:} Choose a (possibly random) restriction operator $R_k$ and compute $g_k$, a fine approximation of $\nabla_x f(x_k)$.
\State \label{model_choice} $\bullet$ {\bf Model choice:} If \eqref{go_down_condition} holds, choose if to use the fine level model and go to Step \ref{finestep}, or the lower level model and go to Step \ref{coarsestep}. Otherwise go to Step \ref{finestep}. 
 
\State\label{finestep} $\bullet$ {\bf Fine step computation:} Build a model $m_{k}(x_k+s)=f_k+ \gk^Ts$ that approximates $f(x)$ around $x_k$. Set $s_k=-\frac{\gk}{\lambda_k\|\gk\|}$, $m_k^0=m_k(x_k)$ and $m_k^s=m_k(x_k+s_k)$. Go to Step \ref{step_acceptance}.
  
\State\label{coarsestep} $\bullet$ {\bf Coarse step computation:}  Randomly define a coarse approximation $\coarse_k$ of $f$, and the lower level model and its regularized version as:
\begin{align*}
		\varphi_k(s)&=\coarse_k(R_k\, x_k+s)+[R_k\,\gk -\nabla_x \coarse_k(R_k\,x_{k})]^Ts,\\
  m_k^R(s)&=\varphi_k(s)+ \frac{1}{2} \regc\|s\|^2.
		\end{align*}
		 Approximately minimize $m_k^R$, yielding an approximate solution $s^*$ satisfying \eqref{stopping}. Define $s_k=P_k s^*$ and set $m_k^0=\varphi_k(0)$ and $m_k^s=\varphi_k(s^*)$.
  
\State $\bullet$ \label{step_acceptance} {\bf Acceptance of the trial point and regularization parameter update:} 
  Obtain an estimate $f_k^s$ of $f(x_k+s_k)$ and compute $\rho_k=\displaystyle \frac{f_k-f_k^s}{m_{k}^0-m_{k}^s}.$ 
 \Statex {\bf If} $\rho_k\geq \eta_1$ and $\|\gk\|\geq \eta_2/\lambda_k$ {\bf then} set  $x_{k+1}=x_k+s_k$ and $\lambda_{k+1}= \gamma\lambda_k$.
 \Statex{\bf Else} set $x_{k+1}=x_k$ and $\lambda_{k+1}= \gamma^{-1}\lambda_k$.
 \State $\bullet$ {\bf Check stopping criterion}. If satisfied stop, otherwise set $k = k+1$ and go to Step \ref{init2}.
   \end{algorithmic}
\end{algorithm}

\section{Convergence theory}\label{sec_conv}

In this section, we provide a theoretical analysis of the proposed multilevel method proving the almost sure global convergence to first order critical points.
Note that, as the method is recursive, we can restrict the analysis to the two-levels case. 
We thus focus on \namedue{} as described in section \ref{sec:2leva}.
The analysis follows the scheme proposed in \cite{storm} and is extended to adaptive regularization methods while including the multilevel steps. 
Let us now first state  some regularity assumptions as in \cite{birgin2017worst}. 
\begin{assumption}\label{hp_lip_grad}
	Let $f:\mathbb{R}^n\rightarrow\mathbb{R}$ and $\coarse_k:\mathbb{R}^{n_c}\rightarrow\mathbb{R}$ with $n\geq n_c$, be continuously differentiable and bounded from below functions, for all $k$.
	Let us assume that the gradients of $f$ and of  $\coarse_k$  are Lipschitz continuous, i.e., that there exist constants $L_f$ and $L_{\coarse_k}$ such that 
	\begin{align*}
 \|\nabla_x f (x)-\nabla_x f(y)\| &\leq L_f\, \|x-y\|\quad \text{ for all} \quad x,y\in\mathbb{R}^{n}, \\
	\|\nabla_x\coarse_k(x)-\nabla_x\coarse_k(y)\| &\leq L_{\coarse_k}\, \|x-y\|\quad \text{ for all} \quad x,y\in\mathbb{R}^{n_c}.
	\end{align*}
    Let $L_\phi:=\max_k L_{\phi_k}$.
\end{assumption}
\begin{remark}
    In the setting of finite sum minimization, cf. Example 1, the assumption is satisfied if all the $f^{(i)}$ are smooth, which is quite a common assumption in this context \cite{garrigos2023handbook}. 
\end{remark}
Moreover, we assume that the models in this work are random functions and so is their behavior and influence on the iterations. Hence, $M_k$ will denote a random model in the $k$-th
iteration, while we will use the notation $m_k = M_k(\omega)$ for its realizations. As a consequence of
using random models, the iterates $X_k$, the regularization parameter $\Lambda_k$ and the steps $S_k$ are also random
quantities, and so $x_k = X_k(\omega)$, $\lambda_k = \Lambda_k(\omega)$, $s_k = S_k(\omega)$ will denote their respective realizations.
Similarly, let random quantities $F_k, F_k^s$ 
denote the estimates of $f (X_k)$ and $f (X_k + S_k)$, with
their realizations denoted by $f_k=F_k
(\omega)$ and $f^s_k=F^s_k(\omega)$. In other words, Algorithm \ref{alg:STGM} results in
a stochastic process $\{M_k, X_k, S_k, \Lambda_k, F_k, F^s_k\}$. Our goal is to show that under certain conditions
on the sequences $\{M_k\}$ and $\{F_k,F^s_k\}$
 the resulting stochastic process has desirable convergence
properties with probability one. 
In particular, we will assume that models $M_k$ and estimates $F_k,F_k^s$ are sufficiently accurate with sufficiently high probability, conditioned on the past.
To formalize conditioning on the past, let $\mathcal{F}^{M \cdot F}_{k-1}$ denote the $\sigma$-algebra generated by $M_0,\dots,M_{k-1}$ and $F_0, \dots, F_{k-1}$ and let $\mathcal{F}^{M \cdot F}_{k-1/2}$ denote the $\sigma$-algebra generated by $M_0,\dots,M_{k}$ and $F_0, \dots, F_{k-1}$. 
To formalize sufficient accuracy we use the  measure for the accuracy 
introduced in \cite{bergou22}, which adapts to regularized models those originally proposed in \cite{storm}.

\begin{definition}\label{def:fullylin}
Suppose that $\nabla f$ is Lipschitz continuous. Given $\lambda_k>0$, a function $m$ is a $\kappa$-fully linear model of
$f$ around the iterate $x_k$ provided, for $\kappa = (\kappa_f , \kappa_g)$, that for all $y$ in a neighbourhood of $x_k$:
\begin{align}
\|\nabla_x f (y) - \nabla_x m(y)\|\leq \frac{\kappa_g}{\lambda_k}, \label{kling}\\
|f (y) - m(y)| \leq \frac{\kappa_f}{\lambda_k^2}.\label{klinf}
\end{align}

\end{definition}

We will ask for this requirement on the fine level model $m_k(x_k+s)=f_k+g_ks^T$. 
Specifically, we will consider probabilistically fully-linear models, according to the following definition \cite{storm}:

\begin{definition} 
A sequence of random models $\{M_k\}$ is said to be $\alpha$-probabilistically $\kappa$-fully linear
with respect to the corresponding sequence $\{X_k,\Lambda_k\}$ 
if the events
$$
I_k = \{M_k \text{ is a } \kappa\text{ -fully linear model of } f \text{ around } X_k\} 
$$
satisfy the condition
$$
\mathbb{P}(I_k|\mathcal{F}^{M\, F}_{k-1}) \geq \alpha,
$$
where $\mathcal{F}^{M\, F}_{k-1}$
is the $\sigma$-algebra generated by $M_0,\dots,M_{k-1}$
and $F_0,\dots,F_{k-1}$.
\end{definition}

Notice that imposing this condition  on the fine level only will be enough to ensure convergence of the method. The first order correction imposed on the coarse model will ensure a link between the two approximations of $f$ and consequently  the coarse step will point in the good direction thanks to the  link with the fine one\footnote{If $R_k$ is the identity, the coherence term makes the coarse models fully linear in a neighbourhood of  $x_k$.  }, cf. \eqref{eq:good_direction}.  Thus the accuracy of the coarse approximations does not need to increase along with the iterations, or the increase can be much slower than at fine level.

We will also require function estimates to be sufficiently accurate. 

\begin{definition}\label{def:eps_acc}
  The estimates $f_k$ and $f_k^s$ are said to be $\epsilon_f$-accurate estimates of $f(x_k)$ and $f(x_k+s_k)$ respectively, for a given $\lambda_k$ if 
  $$
  \lvert f_k-f(x_k)\rvert\leq \frac{\epsilon_f}{\lambda_k^2} \text{ and } \lvert f_k^s-f(x_k+s_k)\rvert\leq \frac{\epsilon_f}{\lambda_k^2}.
  $$
\end{definition}

In particular we will consider probabilistically accurate estimates as in \cite{storm}:

\begin{definition} 
A sequence of random estimates $\{F_k,F_k^s\}$ is said to be $\beta$-probabilistically $\epsilon_f$-accurate
with respect to the corresponding sequence $\{X_k,\Lambda_k,S_k\}$
if the events
$$
J_k = \{F_k,F_k^s \text{ are } \epsilon_f\text{-accurate estimates of } f(x_k) \text{ and } f(x_k+s_k), \text{ respectively, around } X_k\} 
$$
satisfy the condition
$$
\mathbb{P}(J_k|\mathcal{F}^{M\, F}_{k-1/2}) \geq \beta,
$$
where $\epsilon_f$ is a fixed constant and $\mathcal{F}^{M\, F}_{k-1/2}$
is the $\sigma$-algebra generated by $M_0,\dots,M_{k}$
and $F_0,\dots,F_{k-1}$.
\end{definition}

Following \cite{storm}, in our analysis we will require that our method has access 
to $\alpha$-probabilistically $\kappa$-fully linear models, for some fixed $\kappa$ and to $\beta$-probabilistically $\epsilon_f$ accurate function estimates, for some fixed, sufficiently small $\epsilon_f$. Cf. \cite[Section 5]{storm} 
for procedures for constructing probabilistically fully linear models, and probabilistically accurate estimates. Basically, when the function approximations come from a subsampling this construction is possible if the model accounts for enough samples.

\subsection{Convergence analysis}

We start by recalling two useful relations, following from Taylor's theorem, see for example \cite[Corollary A.8.4]{book_compl}.
\begin{lemma}
	 Let $h:\mathbb{R}^n\rightarrow\mathbb{R}$ be a continuously differentiable function with Lipschitz continuous gradient, with $L$ the corresponding Lipschitz constant. Given its first order truncated Taylor series in $x$, $T[h](s) := h(x)+\nabla_x h(x)^Ts$, it holds:
\begin{align}
& h(x+s)= T[h](s)+\int_{0}^{1}[\nabla_x h(x+\xi s)-\nabla_x h(x)]^T{s}\,d\xi,\label{taylor1}\\
&\lvert h(x+s)- T[h](s)\rvert \leq \frac{L}{2}\|s\|^{2},\label{taylor2}
\end{align} 
\end{lemma}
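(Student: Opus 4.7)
The plan is to reduce both identities to a one-dimensional statement by considering the auxiliary function $\varphi(\xi) := h(x+\xi s)$ for $\xi \in [0,1]$. Because $h$ is continuously differentiable, $\varphi$ is continuously differentiable on $[0,1]$ with $\varphi'(\xi) = \nabla_x h(x+\xi s)^T s$ by the chain rule, so the fundamental theorem of calculus yields
$$
h(x+s) - h(x) \;=\; \varphi(1) - \varphi(0) \;=\; \int_{0}^{1} \nabla_x h(x+\xi s)^T s\, d\xi.
$$

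To obtain \eqref{taylor1}, I would add and subtract $\nabla_x h(x)^T s = \int_0^1 \nabla_x h(x)^T s\, d\xi$ inside the integral and regroup, giving
$$
h(x+s) \;=\; h(x) + \nabla_x h(x)^T s + \int_0^1 \bigl[\nabla_x h(x+\xi s) - \nabla_x h(x)\bigr]^T s\, d\xi,
$$
which is exactly \eqref{taylor1} since the first two terms together equal $T[h](s)$.

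For \eqref{taylor2}, I would take the absolute value of the remainder in \eqref{taylor1}, push $|\cdot|$ inside the integral, and bound the integrand by Cauchy--Schwarz, obtaining $|(\nabla_x h(x+\xi s) - \nabla_x h(x))^T s| \le \|\nabla_x h(x+\xi s) - \nabla_x h(x)\|\,\|s\|$. The Lipschitz hypothesis on $\nabla_x h$ gives $\|\nabla_x h(x+\xi s) - \nabla_x h(x)\| \le L\,\xi\|s\|$, so the integrand is dominated by $L\xi\|s\|^2$, and $\int_0^1 L\xi\|s\|^2\,d\xi = \tfrac{L}{2}\|s\|^2$ delivers the claimed bound.

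There is no genuine obstacle here: this is the textbook integral remainder form of Taylor's theorem combined with a single Lipschitz estimate, and the authors invoke it as a standard ingredient. The only minor care is to make sure the chain rule applies (guaranteed by continuous differentiability of $h$) and that $\xi \mapsto \|\nabla_x h(x+\xi s) - \nabla_x h(x)\|$ is integrable (immediate from continuity of $\nabla_x h$).
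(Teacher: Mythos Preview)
Your proof is correct and is precisely the standard textbook argument. The paper itself does not prove this lemma; it simply states the two relations and refers the reader to \cite[Corollary A.8.4]{book_compl}, so there is no alternative approach to compare.
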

We now propose two technical lemmas on the coarse step. 

\begin{lemma}\label{lemma_rho_new}
Let Assumptions \ref{hp:passi} and \ref{hp_lip_grad} hold.
 Consider a realization of Algorithm \ref{alg:STGM} where at iteration $k$ the coarse model is used and let $s_k=Ps^*$ be the resulting step. Then it holds: 
\begin{equation}\label{taylor_varphi}
\lvert \varphi_k(0)+\varphi_k(s^*)-g_k^Ts_k\rvert\leq \frac{L_{\coarse}}{2}\|s^*\|^2.
\end{equation}
\end{lemma}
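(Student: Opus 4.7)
The plan is to reduce the bound to the standard Lipschitz-gradient Taylor remainder \eqref{taylor2} applied to the coarse approximation $\coarse_k$ at the base point $R_k x_k$, and then re-express the linear part using the first-order coherence built into $\varphi_k$.

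First I would apply \eqref{taylor2} to $h = \coarse_k$ (whose gradient is Lipschitz with constant $L_{\coarse_k} \le L_\coarse$ by Assumption \ref{hp_lip_grad}) at the point $R_k x_k$ with increment $s^* \in \mathbb{R}^{n_c}$, giving
\begin{equation*}
\bigl\lvert \coarse_k(R_k x_k + s^*) - \coarse_k(R_k x_k) - \nabla_s \coarse_k(R_k x_k)^T s^* \bigr\rvert \le \frac{L_\coarse}{2}\|s^*\|^2.
\end{equation*}
Next I would expand $\varphi_k(s^*) - \varphi_k(0)$ using the definition \eqref{reg_taylor_model}:
\begin{equation*}
\varphi_k(s^*) - \varphi_k(0) = \bigl[\coarse_k(R_k x_k + s^*) - \coarse_k(R_k x_k)\bigr] + \bigl(R_k g_k - \nabla_s \coarse_k(R_k x_k)\bigr)^T s^*.
\end{equation*}

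Rearranging, the two copies of $\nabla_s \coarse_k(R_k x_k)^T s^*$ cancel, producing
\begin{equation*}
\bigl\lvert \varphi_k(s^*) - \varphi_k(0) - (R_k g_k)^T s^* \bigr\rvert \le \frac{L_\coarse}{2}\|s^*\|^2.
\end{equation*}
Finally, I would use the identity \eqref{eq:good_direction} from Assumption \ref{hp:passi}, namely $(R_k g_k)^T s^* = \nabla_s \varphi_k(0)^T s^* = g_k^T s_k$ (which follows from $R_k = P_k^T$, i.e.\ $\nu_k = 1$, and $s_k = P_k s^*$), to replace $(R_k g_k)^T s^*$ by $g_k^T s_k$ and obtain the claimed bound.

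The argument is essentially a one-line Taylor expansion, so there is no real obstacle: the only subtle point is recognizing that the correction term $(R_k g_k - \nabla_s \coarse_k(R_k x_k))^T s^*$ in the definition of $\varphi_k$ is precisely what kills the coarse linear term $\nabla_s \coarse_k(R_k x_k)^T s^*$ and exposes the \emph{fine} directional derivative $g_k^T s_k$ via the transfer relation between $R_k$ and $P_k$. I read the left-hand side of the lemma as $\lvert \varphi_k(s^*) - \varphi_k(0) - g_k^T s_k\rvert$ (the stated ``$\varphi_k(0)+\varphi_k(s^*)$'' appears to be a typo for ``$\varphi_k(s^*)-\varphi_k(0)$''), since this is what the above calculation yields and what is needed in the sequel to bound the ratio $\rho_k$ in \eqref{rho}.
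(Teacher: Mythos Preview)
Your proof is correct and essentially the same as the paper's. The only cosmetic difference is that the paper applies the Taylor remainder bound \eqref{taylor1} directly to $\varphi_k$ (noting that $\varphi_k$ and $\coarse_k$ differ only by a linear term, hence share the Lipschitz constant $L_\coarse$), whereas you apply \eqref{taylor2} to $\coarse_k$ and then add back the linear correction; the two routes are interchangeable, and your observation that the stated left-hand side should read $\lvert \varphi_k(s^*)-\varphi_k(0)-g_k^T s_k\rvert$ is exactly right.
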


\begin{proof}
Using the first order Taylor expansion of $\varphi_k$ and \eqref{taylor1} applied to $\varphi_k$, and considering that from \eqref{coherence}, $\nabla_s \varphi_k(0)^Ts^*=\gk^Ts_k$,
we can write:
\begin{align*}
\varphi_k(0)-\varphi_k(s^*)= 
-\gk^Ts_k
-\int_{0}^{1} \left[\nabla_s \varphi_k(\xi s^*)-\nabla_s \varphi_k(0)\right]^Ts^* \;d\xi.
 \end{align*}
Using Assumption \ref{hp_lip_grad} and recalling that $\varphi_k$ and $\coarse_k$ just differ  by a linear term, we obtain:
\begin{align*}
\lvert \varphi_k(0)-\varphi_k(s^*)+g_k^Ts_k)-\rvert&
\leq\int_{0}^{1}\lvert \left[\nabla_s \varphi_k(\xi s^*)-\nabla_s \varphi(0)\right]^Ts^* \rvert\;d\xi\\
&\leq \int_{0}^{1} \|\nabla_s \varphi_k(\xi s^*)-\nabla_s \varphi_k(0)\| \|s^*\|\; d\xi
 \leq \frac{L_{\coarse}}{2}\|s^*\|^2.
\end{align*}

\end{proof}

\begin{lemma}\label{lemma_grad}
  Under Assumptions \ref{hp:passi} and \ref{hp_lip_grad}, for any realization of Algorithm \ref{alg:STGM} and for each iteration $k$ where the coarse step is used it holds:
   \begin{equation}\label{rel_g_s}
  \|R g_k\|\leq (L_\phi+\theta+\regc) \|s^*\|.
  \end{equation}
\end{lemma}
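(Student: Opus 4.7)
The plan is to use the stopping criterion \eqref{stopping} on the regularized coarse model together with the first-order coherence condition \eqref{coherence} and the Lipschitz continuity of $\nabla_s \varphi_k$ (which inherits the Lipschitz constant of $\nabla_x \coarse_k$ since the two gradients differ by a constant vector).

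First I would differentiate the regularized coarse model in \eqref{reg_taylor_model} to obtain
\[
\nabla_s m_k^R(s) = \nabla_s \varphi_k(s) + \lambda_k\|g_k\|\, s,
\]
so in particular $\nabla_s m_k^R(s^*) = \nabla_s \varphi_k(s^*) + \lambda_k\|g_k\|\, s^*$. Then I would rewrite
\[
R g_k \;=\; \nabla_s \varphi_k(0) \;=\; \bigl(\nabla_s \varphi_k(0) - \nabla_s \varphi_k(s^*)\bigr) + \nabla_s m_k^R(s^*) - \lambda_k\|g_k\|\, s^*,
\]
using the coherence identity \eqref{coherence} for the first equality.

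Next I would apply the triangle inequality and bound each piece separately. The first term is bounded by $L_{\coarse_k}\|s^*\|\le L_\phi\|s^*\|$ using Assumption \ref{hp_lip_grad} (noting that $\nabla_s\varphi_k$ and $\nabla_s \coarse_k(R_k x_k + \cdot)$ differ only by an additive constant). The second term is at most $\theta\|s^*\|$ by the stopping condition \eqref{stopping}. The third term contributes exactly $\lambda_k\|g_k\|\|s^*\| = \regc\|s^*\|$. Summing the three bounds yields
\[
\|R g_k\| \le (L_\phi + \theta + \regc)\|s^*\|,
\]
which is the claim.

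There is no serious obstacle: the argument is a direct combination of the definition of $m_k^R$, the coherence relation, the stopping criterion and Lipschitz continuity. The only point to be careful about is justifying that $\nabla_s\varphi_k$ is Lipschitz with the same constant as $\nabla_x\coarse_k$, which is immediate from \eqref{reg_taylor_model} since the correction term added to $\coarse_k$ to form $\varphi_k$ is linear in $s$.
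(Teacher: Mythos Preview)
Your proof is correct and follows essentially the same approach as the paper's own argument: both decompose $R g_k=\nabla_s\varphi_k(0)$ into the three pieces $\nabla_s\varphi_k(0)-\nabla_s\varphi_k(s^*)$, $\nabla_s m_k^R(s^*)$, and $-\lambda_k\|g_k\|s^*$, then bound them respectively via Lipschitz continuity, the stopping condition \eqref{stopping}, and directly. The only cosmetic difference is that the paper starts from the triangle inequality on $\|Rg_k\|$ whereas you first write the exact identity and then apply the triangle inequality.
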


\begin{proof} From \eqref{stopping} and \eqref{reg_taylor_model}, since $\nabla_sm_k^R(s)=\nabla_s\varphi_{k}(s)+\lambda_k\|\gk\|s$, it follows:
\begin{align*}
\|R \gk  \|& \leq \|R\gk-\nabla_s\varphi_{k}(s^*)\|+\|\nabla_s\varphi_{k}(s^*)+\lambda_k\|\gk\|s^*\|+ \lambda_k\|\gk\|\|s^*\|\\
&\leq \|R\gk-\nabla_s\phi_k(R_kx_k+s^*)-Rg_k+\nabla_s \phi_k (Rx_k)\|+\theta \|s^*\|+\lambda_k\|\gk\|\|s^*\|\\
&\leq L_\phi \|s^*\|+\theta \|s^*\|+\lambda_k\|\gk\|\|s^*\|.
\end{align*}
Thus we finally obtain
the thesis. 
\end{proof}

The following lemma relates the coarse step size and the regularization parameter $\lambda_k$. 
\begin{lemma}
Let Assumptions \ref{hp:passi} and \ref{hp_lip_grad} hold.
Assume that at iteration $k$ the coarse step is used. 
 Assume that 
  \begin{equation}\label{hplc}
    \frac{1}{\lambda_k}\leq \min \Big\{\frac{1}{L_\phi+\theta},\frac{1}{2L_{\coarse}}\Big\}\|\gk\|,
  \end{equation}
  
  Then 
\begin{equation}\label{rel_g_s2}
    \frac{\kappa_H}{2\lambda_k}\leq \|s^*\|\leq \frac{4\kappa_R}{\lambda_k}.
  \end{equation}

\end{lemma}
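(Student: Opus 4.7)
The plan is to establish the two inequalities separately: the lower bound will come out of Lemma \ref{lemma_grad} combined with the go-down condition \eqref{go_down_condition}, while the upper bound will come from combining the sufficient-decrease property \eqref{decr_varphi} with a Taylor lower bound on $\varphi_k(s^*)-\varphi_k(0)$. The hypothesis \eqref{hplc} is designed to make the regularization term dominate the Lipschitz terms in both cases, which is precisely what allows the bounds to be expressed cleanly in terms of $1/\lambda_k$.

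For the lower bound, I would start from Lemma \ref{lemma_grad}, which states
$\|Rg_k\| \leq (L_\phi+\theta+\lambda_k\|g_k\|)\|s^*\|$, and use the coarse-step eligibility condition \eqref{go_down_condition}, namely $\|Rg_k\|\geq \kappa_H\|g_k\|$, to write
$\kappa_H\|g_k\| \leq (L_\phi+\theta+\lambda_k\|g_k\|)\|s^*\|$. The first part of hypothesis \eqref{hplc} gives $L_\phi+\theta \leq \lambda_k\|g_k\|$, so the factor on the right can be bounded by $2\lambda_k\|g_k\|$, and dividing by $\lambda_k\|g_k\|$ yields $\|s^*\|\geq \kappa_H/(2\lambda_k)$.

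For the upper bound, I would exploit that $\varphi_k$ differs from $\coarse_k$ by a linear term and hence has the same Lipschitz gradient constant $L_\phi$. Then a standard Taylor-type inequality (of the form \eqref{taylor2}, applied here as a lower bound of the form $h(x+s)\geq T[h](s)-\frac{L}{2}\|s\|^2$) together with the coherence identity $\nabla_s\varphi_k(0)=Rg_k$ gives
\begin{equation*}
\varphi_k(s^*)-\varphi_k(0) \geq (Rg_k)^T s^* - \frac{L_\phi}{2}\|s^*\|^2 \geq -\kappa_R\|g_k\|\|s^*\| - \frac{L_\phi}{2}\|s^*\|^2,
\end{equation*}
using Cauchy--Schwarz and $\|R\|\leq \kappa_R$. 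Chaining this with the sufficient decrease \eqref{decr_varphi} yields
$\tfrac12(\lambda_k\|g_k\|-L_\phi)\|s^*\|^2 \leq \kappa_R\|g_k\|\|s^*\|$. Then the second part of hypothesis \eqref{hplc}, $2L_{\coarse}=2L_\phi\leq \lambda_k\|g_k\|$, forces $\lambda_k\|g_k\|-L_\phi\geq \tfrac12\lambda_k\|g_k\|$, and cancelling $\|g_k\|$ and one power of $\|s^*\|$ (nonzero by the lower bound already established) gives $\|s^*\|\leq 4\kappa_R/\lambda_k$.

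I do not expect a real obstacle here; the argument is essentially algebraic once the correct Taylor-type bound on $\varphi_k$ is in place. The only subtlety worth flagging is the observation that $\varphi_k$ and $\coarse_k$ share the same Lipschitz gradient constant (since they differ by an affine term), so that Assumption \ref{hp_lip_grad} can be applied directly to $\varphi_k$; this is the same remark exploited in the proof of Lemma \ref{lemma_rho_new}.
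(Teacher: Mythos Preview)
Your proposal is correct and follows essentially the same route as the paper. The only cosmetic difference is in the upper-bound step: the paper invokes Lemma~\ref{lemma_rho_new} and the identity \eqref{eq:good_direction} to write the linear term as $g_k^Ts_k$ and then bounds $|g_k^Ts_k|\leq\|g_k\|\|P_ks^*\|\leq\kappa_R\|g_k\|\|s^*\|$, whereas you bound $(Rg_k)^Ts^*$ directly via $\|Rg_k\|\leq\kappa_R\|g_k\|$; both yield the same inequality $\tfrac12(\lambda_k\|g_k\|-L_\phi)\|s^*\|^2\leq\kappa_R\|g_k\|\|s^*\|$ and proceed identically from there.
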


\begin{proof}
The first inequality follows from:
\begin{equation}
   \|s^*\|\overset{\eqref{rel_g_s}}{\geq} \frac{\|R_kg_k\|}{ L_\phi+\theta+\regc} \overset{\eqref{go_down_condition}}{\geq} \frac{\kappa_H\|g_k\|}{L_\phi+\theta+\regc} \overset{\eqref{hplc}}{\geq} \frac{\kappa_H}{2\lambda_k}.
  \end{equation}
The second inequality follows from \eqref{taylor_varphi}: 
$$
|\varphi_k(s^*)-\varphi_k(0)|-|\nabla_s\varphi_k(0)^Ts^*|\leq|\varphi_k(s^*)-\varphi_k(0)-\nabla_s\varphi_k(0)^Ts^*|\leq \frac{L_{\coarse}}{2}\|s^*\|^2,
$$
where we have used the fact that from \eqref{reg_taylor_model} and Assumption \ref{hp_lip_grad}, $\varphi_k$ is $L_{\coarse}$-smooth. 
Thus, from \eqref{eq:good_direction} and since $\varphi_k(0)\geq \varphi_k(s^*)$, we obtain
\begin{align*}
\varphi_k(0)-\varphi_k(s^*)&\leq \lvert \nabla_s\varphi_k(0)^Ts^*\rvert +\frac{L_{\coarse}}{2}\|s^*\|^2=\lvert \gk^Ts_k\rvert +\frac{L_{\coarse}}{2}\|s^*\|^2\\
&\leq \|\gk\|\|s_k\|+\frac{L_{\coarse}}{2}\|s^*\|^2\leq \kappa_R\|\gk\|\|s^*\|+\frac{L_{\coarse}}{2}\|s^*\|^2.
\end{align*}
Combining this with  \eqref{decr_varphi} we have: 
$$
\frac{1}{2}\regc \|s^*\|^2\leq \varphi_k(0)-\varphi_k(s^*)\leq  \kappa_R\|\gk\|\|s^*\|+\frac{L_{\coarse}}{2}\|s^*\|^2.
$$
Thus
$$
\left(\frac{1}{2}\regc -\frac{L_{\coarse}}{2}\right)\|s^*\|^2\leq \kappa_R\|\gk\|\|s^*\|.
$$
From \eqref{hplc} we have $\frac{1}{2} \regc- \frac{L_{\coarse}}{2}\geq\frac{1}{4}\regc$ and 
thus 
$$
\frac{1}{4}\regc \|s^*\|\leq  \kappa_R\|\gk\|.
$$
\end{proof}

In the following lemma we measure the decrease predicted by the model. 
\begin{lemma}
  Let Assumptions \ref{hp:passi} and \ref{hp_lip_grad}  hold.
  For any realization of Algorithm \ref{alg:STGM} and for each $k$ it holds:
  \begin{equation}\label{diff_mod}
     m_{k}^s-m_{k}^0\leq\begin{cases} -\frac{\|\gk\|}{\lambda_k} & \text{ if fine step,}\\
         -\frac{\regc}{2}\|s^*\|^2& \text{ if coarse step}.
        \end{cases}
    \end{equation}

\end{lemma}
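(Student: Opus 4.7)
The plan is to split the argument by cases matching the two branches of the algorithm, since in each case the "predicted decrease" $m_k^s - m_k^0$ has a different definition (from Steps \ref{finestep} and \ref{coarsestep} of Algorithm \ref{alg:STGM}).

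For the fine step case, I would just unfold the definitions. By Step \ref{finestep}, we have $m_k^0 = m_k(x_k) = f_k$ and $m_k^s = m_k(x_k + s_k) = f_k + g_k^T s_k$, so
\[
m_k^s - m_k^0 = g_k^T s_k = g_k^T\!\left(-\frac{g_k}{\lambda_k\|g_k\|}\right) = -\frac{\|g_k\|^2}{\lambda_k \|g_k\|} = -\frac{\|g_k\|}{\lambda_k},
\]
which gives the first branch of \eqref{diff_mod}.

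For the coarse step case, by Step \ref{coarsestep}, $m_k^0 = \varphi_k(0)$ and $m_k^s = \varphi_k(s^*)$, hence $m_k^s - m_k^0 = \varphi_k(s^*) - \varphi_k(0)$. The bound then follows immediately from relation \eqref{decr_varphi} in Assumption \ref{hp:passi}, which directly states
\[
\varphi_k(s^*) - \varphi_k(0) \leq -\frac{1}{2}\lambda_k \|g_k\|\,\|s^*\|^2,
\]
matching the second branch of \eqref{diff_mod}.

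There is really no obstacle here: the statement is essentially a bookkeeping consequence of the definitions of $m_k^0, m_k^s$ in the two branches of the algorithm plus the already-assumed sufficient decrease \eqref{decr_varphi} on $\varphi_k$. Neither Lipschitz continuity nor the technical Lemmas \ref{lemma_rho_new}--\ref{lemma_grad} are needed for this particular result; those come into play only later when the predicted decrease has to be compared with the actual decrease. Thus the whole proof should be just a few lines distinguishing the two cases.
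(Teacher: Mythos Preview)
Your proposal is correct and matches the paper's own proof essentially line for line: the fine case is a direct computation $m_k^s-m_k^0=g_k^Ts_k=-\|g_k\|/\lambda_k$, and the coarse case is exactly $\varphi_k(s^*)-\varphi_k(0)\leq -\tfrac{1}{2}\lambda_k\|g_k\|\,\|s^*\|^2$ via \eqref{decr_varphi}. Your observation that neither Lipschitz continuity nor Lemmas~\ref{lemma_rho_new}--\ref{lemma_grad} are actually needed here is also accurate.
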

\begin{proof}
If the fine step is used,
        \begin{align*}
    m_{k}^s-m_{k}^0
           =\gk^Ts_k=-\frac{\|\gk\|^2}{\lambda_k\|\gk\|}=-\frac{\|\gk\|}{\lambda_k}.
        \end{align*}
If the coarse step is used:
         \begin{align*}
           m_{k}^s-m_{k}^0&= 
        \varphi_k(s^*)-\varphi_k(0)\overset{\eqref{decr_varphi}}{\leq} -\frac{\regc}{2}\|s^*\|^2. 
        \end{align*}
        
\end{proof}

We now prove some auxiliary lemmas that provide conditions under which the decrease of the true objective function $f$ is guaranteed. The first lemma states that if the regularization parameter is large enough relative to the size of the fine model gradient and if the fine model is fully linear, then the step $s_k$ provides a decrease in $f$ proportional to the size of the model gradient.

\begin{lemma}\label{lemma4.5_STORM} Under Assumptions  \ref{hp:passi} and \ref{hp_lip_grad},
suppose that $m_k(x_k+s):=f_k+g_k^Ts$ is a $(\kappa_f,\kappa_{g})$-fully linear model of $f$ in a neighbourhood of $x_k$. If 
\begin{equation}\label{hpl}
    \frac{1}{\lambda_k}\leq \min\Bigg\{\frac{1}{L_{\coarse}+\theta},\frac{\kappa_H^2}{64\kappa_f},\frac{1}{ 2L_{\coarse}}\Bigg\}\|\gk\|,
\end{equation} 
then the trial step $s_k$ leads to an improvement in $f(x_k+s_k)$ such that
$$
f(x_k+s_k)-f(x_k)\leq -\frac{\kappa_H^2}{32}\frac{\|\gk\|}{\lambda_k}.
$$
\end{lemma}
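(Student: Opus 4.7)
The plan is to split according to which step was taken at iteration $k$, and in both cases to exploit the $\kappa$-full linearity of the affine fine model $m_k(y)=f_k+g_k^T(y-x_k)$ to pass from a model decrease to a true function decrease. Specifically, applying the two bounds of Definition~\ref{def:fullylin} at $y=x_k$ and $y=x_k+s_k$ gives
\[
f(x_k+s_k)-f(x_k) = [f(x_k+s_k)-m_k(x_k+s_k)] + g_k^Ts_k + [f_k-f(x_k)] \leq g_k^Ts_k + \frac{2\kappa_f}{\lambda_k^2},
\]
since $m_k(x_k+s_k)-m_k(x_k)=g_k^Ts_k$ for any $s_k$. The second condition in \eqref{hpl} then gives $2\kappa_f/\lambda_k^2 \leq (\kappa_H^2/32)\,\|g_k\|/\lambda_k$, so the task reduces to bounding $g_k^Ts_k$ from above in each case.

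For the \textbf{fine step}, $s_k=-g_k/(\lambda_k\|g_k\|)$ immediately yields $g_k^Ts_k=-\|g_k\|/\lambda_k$. Combining with the error term gives $f(x_k+s_k)-f(x_k)\leq -(1-\kappa_H^2/32)\|g_k\|/\lambda_k$, which is bounded above by $-\kappa_H^2\|g_k\|/(32\lambda_k)$ because $\kappa_H\in(0,1)$.

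For the \textbf{coarse step} I would combine Lemma~\ref{lemma_rho_new}, which controls $|g_k^Ts_k-(\varphi_k(s^*)-\varphi_k(0))|$ by $(L_\phi/2)\|s^*\|^2$, with the predicted decrease \eqref{decr_varphi} to obtain
\[
g_k^Ts_k \leq -\frac{\lambda_k\|g_k\|}{2}\|s^*\|^2 + \frac{L_\phi}{2}\|s^*\|^2.
\]
The third condition in \eqref{hpl} gives $\lambda_k\|g_k\|\geq 2L_\phi$, so the right-hand side is at most $-(\lambda_k\|g_k\|/4)\|s^*\|^2$. Since \eqref{hpl} also implies the hypothesis \eqref{hplc} of the previous lemma, the lower bound $\|s^*\|\geq \kappa_H/(2\lambda_k)$ from \eqref{rel_g_s2} applies and yields $g_k^Ts_k\leq -\kappa_H^2\|g_k\|/(16\lambda_k)$. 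Adding the $2\kappa_f/\lambda_k^2$ fully linearity term and invoking the second condition in \eqref{hpl} once more gives the claimed bound.

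The main obstacle is the coarse-step calculation, because four independent sources of error must be reconciled simultaneously: the fully linearity of $m_k$, the Taylor remainder of $\varphi_k$, the predicted decrease \eqref{decr_varphi}, and the lower bound on $\|s^*\|$. The delicate bookkeeping consists in verifying that the three regimes in the hypothesis \eqref{hpl} are exactly what is needed to absorb each error term into a fraction of the leading negative quantity, so that the prefactor $\kappa_H^2/32$ is precisely achievable.
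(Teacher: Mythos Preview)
Your proof is correct and follows essentially the same route as the paper's: the same four-term decomposition in the coarse case (fully-linear error, Lemma~\ref{lemma_rho_new}, predicted decrease \eqref{decr_varphi}, and the lower bound on $\|s^*\|$ from \eqref{rel_g_s2}), and the same direct computation in the fine case. The only cosmetic difference is that you factor out the common bound $f(x_k+s_k)-f(x_k)\le g_k^Ts_k+2\kappa_f/\lambda_k^2$ once at the start and then bound $g_k^Ts_k$ separately, whereas the paper repeats the full telescoping in each branch; the estimates and constants are identical.
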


\begin{proof}

We distinguish two cases depending on the used step.
    \begin{enumerate}
        \item In the fine step case,   we get
        \begin{align*}
    f(x_k+s_k)-f(x_k)&=f(x_k+s_k)-m_k(x_k+s_k)+m_k(x_k+s_k)-m_k(x_k)+m_k(x_k)-f(x_k)\\
    &\overset{\eqref{klinf}+\eqref{diff_mod}}{\leq} \frac{2\kappa_f}{\lambda_k^2}-\frac{\|\gk\|}{\lambda_k}\overset{\eqref{hpl}}{\leq} -\frac{1}{2}\frac{\|\gk\|}{\lambda_k}\leq  -\frac{\kappa_H^2}{32}\frac{\|\gk\|}{\lambda_k},
\end{align*}
where we have used that, from \eqref{hpl}, $\frac{1}{\lambda_k}\leq \frac{\kappa_H^2}{64\kappa_f}\|\gk\|\leq \frac{1}{4\kappa_f}\|\gk\|$, since $\kappa_H\leq 1$. 
\item When the coarse step is used, we have, for $m_k$ the fine model, that
\begin{align*}
    f(x_k+s_k)-f(x_k)&=f(x_k+s_k)-m_k(x_k+s_k)\\
    &+m_k(x_k+s_k)-m_k(x_k)-\varphi_k(s^*)+\varphi_k(0)\\
    &-\varphi_k(0)+\varphi_k(s^*)\\
    &+m_k(x_k)-f(x_k).
\end{align*}
The first and the last terms are bounded by $\frac{\kappa_f}{\lambda_k^2}$ from \eqref{klinf}. The second term from Lemma \ref{lemma_rho_new} is bounded by $\frac{L_{\coarse}}{2}\|s^*\|^2$. The third term is bounded by $-\frac{{\regc}}{2}\|s^*\|^2$ from \eqref{decr_varphi}.
Thus
    \begin{align*}
    f(x_k+s_k)-f(x_k)&\leq 
        \frac{2\kappa_f}{\lambda_k^2}+\left(\frac{L_{\coarse}}{2}-\frac{\lambda_k\|\gk\|}{2}\right)\|s^*\|^2\\
    &\overset{\eqref{hpl}}{\leq}
\frac{2\kappa_f}{\lambda_k^2}-\frac{\regc}{4}\|s^*\|^2\\
      &\overset{\eqref{rel_g_s2}}{\leq}\frac{2\kappa_f}{\lambda_k^2}-\frac{\|\gk\|\kappa_H^2}{16\lambda_k}
      \\&\overset{\eqref{hpl}}{\leq} -\frac{\kappa_H^2}{32}\frac{\|\gk\|}{\lambda_k}.
     \end{align*}
    
    \end{enumerate}

\end{proof}

The next lemma shows that for a sufficiently large regularization parameter $\lambda_k$ relative to the size of the true gradient $\nabla_x f(x_k)$, the guaranteed decrease in the objective function, provided by $s_k$, is proportional to the size of the true gradient.

\begin{lemma} \label{lem:46storm}
Let Assumptions \ref{hp:passi} and \ref{hp_lip_grad} hold and 
suppose that $m_k$ is a $(\kappa_f,\kappa_{g})$-fully linear model of $f$ in a neighbourhood of $x_k$. If 
\begin{equation}\label{hpl2}
    \frac{1}{\lambda_k}\leq \min\Bigg\{\frac{1}{L_\phi+\theta+\kappa_g},\frac{1}{(64\kappa_f/\kappa_H^2)+\kappa_g}, \frac{1}{ 2L_{\coarse}+\kappa_g}\Bigg\}\|\gradf\|,
\end{equation}
then the trial step $s_k$ leads to an improvement in $f(x_k+s_k)$ such that
$$
f(x_k+s_k)-f(x_k)\leq -C_1\frac{\|\gradf\|}{\lambda_k},
$$
with $C_1:=\frac{\kappa_H^2}{32}\max\Big\{\frac{L_{\phi}+\theta}{L_{\phi}+\theta+\kappa_g},\frac{64\kappa_f}{64\kappa_f+\kappa_g\kappa_H^2},\frac{{2L_{\coarse}}}{{2L_{\coarse}}+\kappa_g}\Big\}$.
\end{lemma}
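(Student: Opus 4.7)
The plan is to bootstrap Lemma \ref{lemma4.5_STORM} by translating the assumption \eqref{hpl2}, which is phrased in terms of the true gradient $\|\nabla_x f(x_k)\|$, into the previous assumption \eqref{hpl}, which is phrased in terms of the model gradient $\|g_k\|$. The key input is the fully linear property \eqref{kling}, which gives
\[
\|g_k\| \geq \|\nabla_x f(x_k)\| - \frac{\kappa_g}{\lambda_k}.
\]

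First I would verify that each of the three minima in \eqref{hpl2} implies the corresponding minimum in \eqref{hpl}. Writing $A = L_\phi+\theta$, $B = 64\kappa_f/\kappa_H^2$, $C = 2L_{\coarse}$, each bound in \eqref{hpl2} has the form $\frac{1}{\lambda_k}(X+\kappa_g) \leq \|\nabla_x f(x_k)\|$ for $X \in \{A,B,C\}$. Rearranging, $\frac{\kappa_g}{\lambda_k} \leq \frac{\kappa_g}{X+\kappa_g}\|\nabla_x f(x_k)\|$, and therefore by the fully linear bound
\[
\|g_k\| \geq \frac{X}{X+\kappa_g}\|\nabla_x f(x_k)\|.
\]
In particular $\frac{1}{\lambda_k} \leq \frac{\|\nabla_x f(x_k)\|}{X+\kappa_g} \leq \frac{\|g_k\|}{X}$, so each inequality of \eqref{hpl} holds. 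This means we may apply Lemma \ref{lemma4.5_STORM} and obtain
\[
f(x_k+s_k) - f(x_k) \leq -\frac{\kappa_H^2}{32}\frac{\|g_k\|}{\lambda_k}.
\]

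Next I would substitute the three lower bounds on $\|g_k\|$ derived above. Since \eqref{hpl2} requires all three bounds simultaneously, we have in fact
\[
\|g_k\| \geq \max\left\{\frac{L_\phi+\theta}{L_\phi+\theta+\kappa_g},\ \frac{64\kappa_f}{64\kappa_f+\kappa_g\kappa_H^2},\ \frac{2L_{\coarse}}{2L_{\coarse}+\kappa_g}\right\}\|\nabla_x f(x_k)\|,
\]
where for the second ratio I clear the $\kappa_H^2$ in the denominator to rewrite $\frac{B}{B+\kappa_g}$ in the stated form. Plugging this into the previous display yields $f(x_k+s_k)-f(x_k) \leq -C_1 \|\nabla_x f(x_k)\|/\lambda_k$ with the claimed constant.

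I do not expect any real obstacle here since the argument is purely algebraic once Lemma \ref{lemma4.5_STORM} is in hand; the only point that needs care is noticing that the three ratios come from three independent constraints in the min of \eqref{hpl2}, which is why the final $C_1$ involves a max rather than a min of the three ratios (we get to pick the most favourable of the three lower bounds on $\|g_k\|$). No case split on fine versus coarse step is needed, because that distinction was already absorbed into Lemma \ref{lemma4.5_STORM}.
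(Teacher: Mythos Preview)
Your proposal is correct and follows essentially the same approach as the paper: use the fully linear bound $\|g_k\|\geq\|\nabla_x f(x_k)\|-\kappa_g/\lambda_k$ together with \eqref{hpl2} to verify \eqref{hpl}, apply Lemma~\ref{lemma4.5_STORM}, and then re-use the same inequality to convert the resulting bound on $\|g_k\|$ into one on $\|\nabla_x f(x_k)\|$. Your observation that no fine/coarse case split is needed here is also exactly how the paper proceeds.
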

\begin{proof}
We first prove that the assumption of Lemma \ref{lemma4.5_STORM} is satisfied, and we use its result to deduce the decrease of the objective function in terms of $\|\gradf\|$ rather than $\|\gk\|$, by linking these two quantities through the assumption of $\kappa$-fully linear model, which yields that
\begin{equation}\label{flin}
           \|\gk\|\geq \|\gradf\|-\frac{\kappa_g}{\lambda_k}.
\end{equation}
From assumption \eqref{hpl2} it holds 
    \begin{equation*}
        \|\gradf\|\geq  \max\{L_{\phi}+\theta+\kappa_g,64\kappa_f/\kappa_H^2+\kappa_g,{2 L_{\coarse}}+\kappa_g \}\frac{1}{\lambda_k},
    \end{equation*}
and thus from \eqref{flin} we have 
    $$
    \|\gk\|\geq \|\gradf\|-\frac{\kappa_g}{\lambda_k}\geq\max\{L_{\phi}+\theta,64\kappa_f/\kappa_H^2, {2 L_{\coarse}}\}\frac{1}{\lambda_k}.
    $$
    Thus the assumption of Lemma \ref{lemma4.5_STORM} is satisfied and 
    $$
    f(x_k+s_k)-f(x_k)\leq  -\frac{\kappa_H^2}{32}\frac{\|\gk\|}{\lambda_k}. 
    $$
In the same way from \eqref{hpl2} and \eqref{flin} we have
    \begin{align*}
    \|\gk\|&\geq \|\gradf\|-\frac{\kappa_g}{\lambda_k}\\
    &\geq  \|\gradf\|-\kappa_g\min\Big\{\frac{1}{L_{\phi}+\theta+\kappa_g},\frac{1}{64\kappa_f/\kappa_H^2+\kappa_g}, \frac{1}{2L_{\coarse}+\kappa_g}\Big\}\|\gradf\|\\
    &=\max\Big\{\frac{L_{\phi}+\theta}{L_{\phi}+\theta+\kappa_g},\frac{64\kappa_f}{64\kappa_f+\kappa_g\kappa_H^2}, \frac{{2L_{\coarse}}}{{2 L_{\coarse}}+\kappa_g}\Big\}\|\gradf\|:=\tilde{C}_1\|\gradf\|.
    \end{align*}
We conclude that 
    $$
    f(x_k+s_k)-f(x_k)\leq  -\frac{{\kappa_H^2}}{32}\frac{\|\gk\|}{\lambda_k}\leq -\frac{{\kappa_H^2}\tilde{C}_1}{32}\frac{\|\gradf\|}{\lambda_k}:=-C_1\frac{\|\gradf\|}{\lambda_k}.
    $$

\end{proof}

We now prove a lemma that states that, if the estimates are sufficiently accurate, the fine model is fully-linear and the regularization parameter is large enough relatively to the size of the model gradient, then a successful step is guaranteed. 

\begin{lemma}
Let Assumptions \ref{hp:passi} and \ref{hp_lip_grad}  hold. Suppose that $m_k$ is a $(\kappa_f,\kappa_g)$-fully linear model in a neighbourhood of $x_k$ and that the estimates $\{f_k,f_k^s\}$ are $\epsilon_f$-accurate with $\epsilon_f\leq \kappa_f$. If 
\begin{equation}\label{hp}
    \frac{1}{\lambda_k}\leq \min\Big\{\frac{1}{L_{\coarse}+\theta},\frac{1}{\eta_2},\frac{1-\eta_1}{32\kappa_f/\kappa_H^2+ L_{\coarse}}\Big\}\|\gk\|,
\end{equation}
then the $k$-th iteration is successful.

\end{lemma}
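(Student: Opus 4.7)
The plan is to verify the two conditions defining a successful iteration separately. The inequality $\|g_k\|\ge \eta_2/\lambda_k$ is immediate from the bound $\frac{1}{\lambda_k}\le \frac{1}{\eta_2}\|g_k\|$ contained in \eqref{hp}. The substantive step is to show $\rho_k\ge \eta_1$; I would estimate
\[
|\rho_k - 1| = \frac{|(f_k-f_k^s)-(m_k^0-m_k^s)|}{m_k^0-m_k^s}
\]
and bound it by $1-\eta_1$. Inserting $\pm f(x_k)$ and $\pm f(x_k+s_k)$ in the numerator and applying the $\epsilon_f$-accuracy of the estimates (with $\epsilon_f\le\kappa_f$), the numerator splits as $2\kappa_f/\lambda_k^2$ plus the model--function discrepancy $|(f(x_k)-f(x_k+s_k)) - (m_k^0-m_k^s)|$, which I would estimate case by case.

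For the fine step, since $m_k(x_k+s)=f_k+g_k^T s$, the fully linear bound \eqref{klinf} applied at $x_k$ and at $x_k+s_k$ and subtracted yields $|(f(x_k)-f(x_k+s_k))+g_k^T s_k|\le 2\kappa_f/\lambda_k^2$; since $m_k^0-m_k^s=\|g_k\|/\lambda_k$, the ratio becomes at most $4\kappa_f/(\lambda_k\|g_k\|)$, which does not exceed $1-\eta_1$ by the third clause of \eqref{hp}, because $\kappa_H\le 1$ implies $32\kappa_f/\kappa_H^2+L_\phi\ge 4\kappa_f$.

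For the coarse step the fully linear bound combined with Lemma \ref{lemma_rho_new} yields $|(f(x_k)-f(x_k+s_k))-(\varphi_k(0)-\varphi_k(s^*))|\le 2\kappa_f/\lambda_k^2 + (L_\phi/2)\|s^*\|^2$. The key idea is to handle the two resulting terms in the ratio against different lower bounds of the predicted decrease $\varphi_k(0)-\varphi_k(s^*)$: the $\kappa_f$ portion I divide by $\kappa_H^2\|g_k\|/(8\lambda_k)$, obtained from \eqref{decr_varphi} combined with the lower bound $\|s^*\|\ge \kappa_H/(2\lambda_k)$ in \eqref{rel_g_s2}, producing a $32\kappa_f/(\kappa_H^2\lambda_k\|g_k\|)$ contribution; the second-order term $(L_\phi/2)\|s^*\|^2$ I divide directly against $(\lambda_k\|g_k\|/2)\|s^*\|^2$ from \eqref{decr_varphi}, so that $\|s^*\|^2$ cancels and leaves $L_\phi/(\lambda_k\|g_k\|)$. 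Summing,
\[
|\rho_k-1|\le \frac{32\kappa_f/\kappa_H^2 + L_\phi}{\lambda_k\|g_k\|}\le 1-\eta_1,
\]
the last inequality being exactly the third bound in \eqref{hp}. The first clause of \eqref{hp} enters implicitly through the hypothesis needed to invoke the lower bound on $\|s^*\|$ in \eqref{rel_g_s2}.

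The main obstacle is the treatment of the second-order error $(L_\phi/2)\|s^*\|^2$ in the coarse case: bounding it uniformly via the upper bound on $\|s^*\|$ in \eqref{rel_g_s2} would introduce an extra $\kappa_R^2$ factor absent from the hypothesis; dividing it relatively against the matching $\|s^*\|^2$ already present in the predicted decrease avoids this mismatch and produces constants that align exactly with those in the third clause of \eqref{hp}.
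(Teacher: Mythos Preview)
Your proposal is correct and follows essentially the same route as the paper. The only cosmetic difference is in the initial splitting of the numerator: the paper exploits $f_k=m_k(x_k)=m_k^0$ to write $\rho_k-1$ directly as a sum of two (fine case) or three (coarse case) fractions, which saves one application each of the $\epsilon_f$-accuracy and the fully-linear bound and produces $2\kappa_f/(\lambda_k\|g_k\|)$ rather than your $4\kappa_f/(\lambda_k\|g_k\|)$ in the fine case; your looser constant is still absorbed by the third clause of \eqref{hp}. Your handling of the coarse case---bounding the $(L_\phi/2)\|s^*\|^2$ term against the matching $\|s^*\|^2$ in \eqref{decr_varphi} and the $\kappa_f$ term against the lower bound on $\|s^*\|$ from \eqref{rel_g_s2}---is exactly what the paper does, and your observation that only the first clause of \eqref{hp} is needed to activate the lower half of \eqref{rel_g_s2} is accurate.
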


\begin{proof}
Let us consider $\rho_k$ in Step \ref{step_acceptance} of Algorithm \ref{alg:STGM}. When the fine step is used, it holds:
 \begin{align}
        \rho_k&=\frac{f_k-f_k^s}{m_{k}^0-m_{k}^s}
        =\frac{m_k^0-m_k^s}{m_k^0-m_k^s}
        +\frac{m_k^s-f(x_k+s_k)}{m_k^0-m_k^s}
        +\frac{f(x_k+s_k)-f_k^s}{m_k^0-m_k^s}\label{develop_rho}.
 \end{align}
 From the assumption on the function estimates (cf. Definition \ref{def:eps_acc}) it follows:
 \begin{align*}
  \lvert f_k^s-f(x_k+s_k)\rvert\leq \frac{\epsilon_f}{\lambda_k^2}\leq \frac{\kappa_f}{\lambda_k^2}.
  \end{align*}
From the assumption of $\kappa$-fully linearity of the model the numerator of the second term is bounded by \eqref{klinf}.
  Consequently, the numerator of $|\rho_k-1|$ is bounded by
  $
  \frac{2\kappa_f}{\lambda_k^2}.
  $
The denominator is bounded from \eqref{diff_mod}. 
  Thus by \eqref{hp} 
  $$
  \lvert\rho_k-1\rvert \leq  \frac{2\kappa_f} {\lambda_k\|\gk\|}\leq 1-\eta_1.
  $$
If the coarse step is used we have: 
\begin{align*}
        \rho_k&=\frac{f_k-f_k^s}{m_k^0-m_k^s}\\
        &=\frac{f_k+g_k^Ts_k-f(x_k+s_k)}{\varphi_k(0)-\varphi_k(s^*)}
        +\frac{\varphi_k(s^*)-\varphi_k(0)-g_k^Ts_k}{\varphi_k(0)-\varphi_k(s^*)}
        +\frac{\varphi_k(0)-\varphi_k(s^*)}{\varphi_k(0)-\varphi_k(s^*)}\\
       & +\frac{f(x_k+s_k)-f_k^s}{\varphi_k(0)-\varphi_k(s^*)}.
 \end{align*}
Considering the first and the last term,  the numerators can be bounded as in the first case. We thus have 
  \begin{align*}
 \Bigg\lvert \frac{f_k+g_k^Ts_k-f(x_k+s_k)}{\varphi_k(0)-\varphi_k(s^*)}  +\frac{f(x_k+s_k)-f_k^s}{\varphi_k(0)-\varphi_k(s^*)} \Bigg\rvert\overset{\eqref{decr_varphi}}{\leq}\frac{
  \frac{2\kappa_f}{\lambda_k^2}
  }{
\frac{\regc}{2}\|s^*\|^2
}\overset{\eqref{rel_g_s2}}{\leq}    
\frac{
  \frac{2\kappa_f}{\lambda_k^2}
  }{
\frac{\regc}{2}\frac{\kappa_H^2}{4\lambda_k^2}
}
=\frac{32\kappa_f}{\lambda_k\|\gk\|\kappa_H^2}.
  \end{align*}
For the second term we have:
 \begin{align*}
  \Bigg\lvert\frac{\varphi_k(s^*)-\varphi_k(0)-g_k^Ts_k}{m_k^0-m_k^s}\Bigg\rvert\overset{\eqref{taylor_varphi}+\eqref{decr_varphi}} {\leq} \frac{\frac{L_{\coarse}}{2}\|s^*\|^2}{\frac{\regc}{2}\|s^*\|^2}=\frac{L_{\coarse}}{\regc}.
  \end{align*}
 Thus from \eqref{hp} 
 
  \begin{align*}
  \lvert\rho_k-1\rvert &\leq  \frac{32\kappa_f/\kappa_H^2+L_{\coarse}} {\lambda_k\|\gk\|}   \leq 1-\eta_1.
  \end{align*}
  Hence in every case $\rho_k\geq 1$. Moreover, since $\|\gk\|\geq \frac{\eta_2}{\lambda_k}$ from \eqref{hp}, the $k$-th iteration is successful.
\end{proof} 

Finally, we state and prove the lemma that guarantees an amount of decrease of the objective function on a  true successful iteration. 

\begin{lemma} \label{lem:48storm}
Under Assumptions \ref{hp:passi} and \ref{hp_lip_grad} , suppose that the estimates $\{f_k,f_k^s\}$ are $\epsilon_f$-accurate with $\epsilon_f<\frac{\eta_1\eta_2\kappa_H^2}{16}$. If a trial step $s_k$ is accepted then the improvement in $f$ is bounded below by:
$$
f(x_{k+1})-f(x_k)\leq -\frac{C_2}{\lambda_k^2}
$$
where $C_2=\frac{\eta_1\eta_2\kappa_H^2}{8}-2\epsilon_f>0$.
\end{lemma}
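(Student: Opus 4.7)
The plan is to exploit the two facts that govern a successful iteration, namely $\rho_k \geq \eta_1$ and $\|g_k\| \geq \eta_2/\lambda_k$, together with the lower bound on the predicted model decrease provided by \eqref{diff_mod}, and then transfer the decrease on the estimates to a decrease on the true objective through $\epsilon_f$-accuracy. Concretely, acceptance gives
\begin{equation*}
f_k - f_k^s \;\geq\; \eta_1\bigl(m_k^0 - m_k^s\bigr),
\end{equation*}
so the problem reduces to lower bounding $m_k^0 - m_k^s$ in both the fine and the coarse step case.

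In the fine step case, \eqref{diff_mod} directly gives $m_k^0 - m_k^s \geq \|g_k\|/\lambda_k \geq \eta_2/\lambda_k^2$ using the acceptance condition $\|g_k\| \geq \eta_2/\lambda_k$. In the coarse step case, \eqref{diff_mod} gives $m_k^0 - m_k^s \geq \frac{\lambda_k\|g_k\|}{2}\|s^*\|^2$; plugging in the lower bound $\|s^*\| \geq \kappa_H/(2\lambda_k)$ from \eqref{rel_g_s2} and again using $\|g_k\| \geq \eta_2/\lambda_k$ yields $m_k^0 - m_k^s \geq \kappa_H^2\eta_2/(8\lambda_k^2)$. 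Since $\kappa_H\in(0,1)$, this coarse bound is the weaker one, so in both cases
\begin{equation*}
f_k - f_k^s \;\geq\; \frac{\eta_1 \eta_2 \kappa_H^2}{8\lambda_k^2}.
\end{equation*}

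It then remains to convert this decrease on the estimates to a decrease on $f$. Writing
\begin{equation*}
f(x_{k+1})-f(x_k) = \bigl(f(x_k+s_k)-f_k^s\bigr) + \bigl(f_k^s - f_k\bigr) + \bigl(f_k - f(x_k)\bigr),
\end{equation*}
and applying Definition \ref{def:eps_acc} to the first and third terms, each bounded in absolute value by $\epsilon_f/\lambda_k^2$, we obtain
\begin{equation*}
f(x_{k+1})-f(x_k) \;\leq\; \frac{2\epsilon_f}{\lambda_k^2} - \frac{\eta_1 \eta_2 \kappa_H^2}{8\lambda_k^2} \;=\; -\frac{C_2}{\lambda_k^2},
\end{equation*}
and the hypothesis $\epsilon_f < \eta_1\eta_2\kappa_H^2/16$ guarantees $C_2>0$.

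The only non-routine step is the coarse-step lower bound on $m_k^0-m_k^s$: I need to be sure that \eqref{rel_g_s2} is available here, since that bound was established under the regularity condition \eqref{hplc} on $\lambda_k$ relative to $\|g_k\|$. I would either invoke this lemma within a regime where \eqref{hplc} is guaranteed by the outer induction of the convergence proof, or, if that is not the case, re-derive a slightly weaker lower bound on $\|s^*\|$ directly from \eqref{rel_g_s} and \eqref{go_down_condition} and absorb the resulting constant into the definition of $C_2$. The remainder of the argument is a short chain of substitutions and does not present any real difficulty.
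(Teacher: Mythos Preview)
Your argument is correct and mirrors the paper's proof exactly: both split into fine and coarse cases, use \eqref{diff_mod} together with $\|g_k\|\geq\eta_2/\lambda_k$ and, in the coarse case, the lower bound $\|s^*\|\geq\kappa_H/(2\lambda_k)$ from \eqref{rel_g_s2}, then pass from estimates to $f$ via $\epsilon_f$-accuracy. Your caveat about \eqref{rel_g_s2} requiring \eqref{hplc} is well spotted, but the paper invokes \eqref{rel_g_s2} in precisely the same way without re-verifying \eqref{hplc}; the justification is deferred to the global hypotheses on $\eta_2$ in Theorem~\ref{thm}.
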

\begin{proof}
If the iteration is successful, this means that $\|\gk\|\geq\frac{\eta_2}{\lambda_k}$ and $\rho_k\geq \eta_1$. Thus, if the fine step is used,
$$
f_k-f_k^s\geq \eta_1 (m_k^0-m_k^s)\overset{\eqref{diff_mod}}{\geq} \eta_1\frac{\|\gk\|}{\lambda_k}\geq\frac{\eta_1\eta_2}{\lambda_k^2}.
$$
If the coarse step is used
\begin{align*}
f_k-f_k^s&\geq \eta_1 (m_k^0-m_k^s)\overset{\eqref{diff_mod}}{\geq}
\frac{\eta_1}{2}\regc\|s^*\|^2\\
&\overset{\eqref{rel_g_s2}}{\geq}\frac{\eta_1\kappa_H^2}{8}{\|\gk\|}\frac{1}{\lambda_k}{\geq}\frac{\eta_1\eta_2\kappa_H^2}{8}\frac{1}{\lambda_k^2}.
\end{align*}
Then, 
since the estimates are $\epsilon_f$-accurate, we have that the improvement in $f$ can be bounded as 
$$
f(x_k+s_k)-f(x_k)=f(x_k+s_k)-f_k^s+f_k^s-f_k+f_k-f(x_k)\leq -\frac{C_2}{\lambda_k^2},
$$
where $C_2=\frac{\eta_1\eta_2\kappa_H^2}{8}-2\epsilon_f>0$.
\end{proof}

To prove convergence of Algorithm \ref{alg:STGM} we need to assume that the fine-level models $\{M_k\}$  and the estimates $\{F_k,F_k^s\}$ are sufficiently accurate with sufficiently high probability.

\begin{assumption}\label{hp:ab}
    Given values of $\alpha,\beta\in(0,1)$ and $\epsilon_f>0$, there exist $\kappa_g$ and $\kappa_f$ such that the sequence of fine-level models $\{M_k\}$ and estimates $\{F_k,F_k^s\}$ generated by Algorithm 1 are, respectively, $\alpha$-probabilistically $(\kappa_f,\kappa_g)$-fully-linear and $\beta$-probabilistically $\epsilon_f$-accurate. 
\end{assumption}

The following theorem states that the regularization parameter $\lambda_k$ converges to $+\infty$
 with probability one. Together with its corollary it gives conditions on the existence of $\kappa_g$ and $\kappa_f$ given $\alpha,\beta$ and $\epsilon_f$. 

 \begin{theorem}\label{thm}
     Let Assumptions \ref{hp:passi}, \ref{hp_lip_grad} and \ref{hp:ab}  be satisfied and assume that in Algorithm \ref{alg:STGM} the following holds.
     \begin{itemize}
         \item The step acceptance parameter $\eta_2$ is chosen so that 
         \begin{equation*}
             \eta_2\geq \max\{L_{\phi}+\theta,24\kappa_f\}.
         \end{equation*}
         \item The accuracy parameter of the estimates satisfies
         \begin{equation*}
            \epsilon_f\leq \min \left \{ \kappa_f, \frac{\eta_1\eta_2 {\kappa_H^2}}{32}\right\}.
         \end{equation*}
     \end{itemize}
     Then $\alpha$ and $\beta$ can be chosen so that, if Assumption \ref{hp:ab} holds for these values, then the sequence of regularization parameters $\{\Lambda_k\}$ generated by Algorithm \ref{alg:STGM} satisfies
     $$
\sum_{k=0}^{\infty}\frac{1}{\Lambda_k^2}<\infty
     $$
     almost surely.
 \end{theorem}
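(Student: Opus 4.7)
I would follow the Lyapunov / supermartingale scheme introduced for STORM in \cite{storm}. Let $f_{\min}$ be a deterministic lower bound for $f$ (available by Assumption \ref{hp_lip_grad}) and, for a weight $\nu\in(0,1)$ to be fixed later, introduce the nonnegative random process
$$\Phi_k := \nu\bigl(f(X_k) - f_{\min}\bigr) + (1-\nu)\frac{1}{\Lambda_k^2}.$$
The target is the one-step conditional drift
$$\mathbb{E}\bigl[\Phi_{k+1} - \Phi_k \,\big|\, \mathcal{F}_{k-1}^{M\cdot F}\bigr] \leq -\frac{\sigma}{\Lambda_k^2}$$
for some deterministic $\sigma>0$. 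Once this is established, telescoping and Fubini's theorem force $\sum_{k\geq 0}\mathbb{E}[1/\Lambda_k^2]<\infty$, whence $\sum_{k}1/\Lambda_k^2<\infty$ almost surely.

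The drift estimate would come from a pathwise case analysis over the four events built from $I_k$ (fine model $\kappa$-fully linear) and $J_k$ ($\epsilon_f$-accurate estimates), crossed with successful/unsuccessful iterations. On $I_k\cap J_k$ (probability at least $\alpha\beta$ by Assumption \ref{hp:ab}), Lemmas \ref{lem:46storm} and \ref{lem:48storm}, together with the lower bound on $\eta_2$ and the upper bound on $\epsilon_f$ imposed in the theorem, yield a genuine decrease $f(X_{k+1})-f(X_k)\leq -C_2/\Lambda_k^2$ on successful iterations; on unsuccessful ones $X_{k+1}=X_k$ and, since $\Lambda_{k+1}=\gamma^{-1}\Lambda_k$ with $\gamma\in(0,1)$, the regularization term actually decreases. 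In both subcases, picking $\nu$ close enough to $1$ makes the favorable $f$-decrease dominate the adverse $(\gamma^{-2}-1)(1-\nu)/\Lambda_k^2$ growth of the regularization term that can occur on successful iterations.

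On the complementary events $I_k^c\cup J_k^c$ a spurious successful step may increase $f$, but the increase can be controlled because in both the fine and the coarse case the step length admits a uniform bound $\|S_k\|\leq C/\Lambda_k$: for the fine step this is immediate from $\|S_k\|=1/\Lambda_k$, and for the coarse step it can be extracted from \eqref{stopping}, \eqref{decr_varphi}, Assumption \ref{hp:passi} and $\|R_k\|\leq\kappa_R$, without invoking the fully linear property. Combined with Lipschitz continuity of $\nabla f$, this produces a worst-case bound of the form $|f(X_{k+1})-f(X_k)|\leq c_1\|\nabla f(X_k)\|/\Lambda_k+c_2/\Lambda_k^2$. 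The conditional expectation of these unfavorable contributions is weighted by $1-\alpha$ and $1-\beta$; by Assumption \ref{hp:ab} these parameters can be pushed close enough to $1$ (depending on $C_1,C_2,L_f,L_\phi,\kappa_R,\gamma,\nu$) so that the total unfavorable contribution is strictly dominated by the favorable one on $I_k\cap J_k$, delivering the required drift.

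The main technical obstacle is the coupling between $\alpha,\beta$ and the deterministic constants: the bad-event increase of $f$ carries an $O(\|\nabla f(X_k)\|/\Lambda_k)$ term that is not a priori of order $1/\Lambda_k^2$. The standard way around this, as in \cite{storm}, is to split the analysis according to whether $\|\nabla f(X_k)\|\lesssim 1/\Lambda_k$ (in which case $\|S_k\|\cdot\|\nabla f(X_k)\|=O(1/\Lambda_k^2)$ and the argument closes immediately) or the reverse inequality holds (in which case Lemma \ref{lem:46storm} already delivers a sufficient decrease that absorbs the bad-event losses after tuning $\alpha,\beta$). Carrying out this dichotomy cleanly, for both the fine and the coarse step simultaneously and with the extra correction term $\varphi_k$, is the crux of the argument and the nontrivial extension of \cite[Theorem 4.11]{storm} to the regularized multilevel setting considered here.
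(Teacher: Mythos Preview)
Your proposal is correct and follows essentially the same route as the paper: the paper defines the Lyapunov process $h_k=\nu f(X_k)+(1-\nu)/\Lambda_k^2$ (your $\Phi_k$ up to an additive constant), imposes a lower bound on $\nu/(1-\nu)$ involving $C_1$, $\eta_1\eta_2\kappa_H^2$ and $\kappa_f$, records the case-by-case drift bounds $b_1,b_2,b_3$, and concludes $\mathbb{E}[h_{k+1}-h_k\mid\mathcal{F}_{k-1}^{M\cdot F}]\leq -\sigma/\Lambda_k^2$ exactly as you outline, deferring all remaining details to the STORM proof of \cite[Theorem~4.11]{storm}. Your sketch is in fact more explicit than the paper's own proof about the four-event decomposition, the dichotomy on $\|\nabla f(X_k)\|$ versus $1/\Lambda_k$, and the uniform step bound $\|S_k\|\leq C/\Lambda_k$ needed to control the bad events in the coarse case.
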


 \begin{proof}
     The scheme of the proof is the same as that of \cite[Theorem 4.11]{storm}. We outline here just the differences.  Let $C_1$  be defined as in Lemma \ref{lem:46storm}.
     We define
     $$
     h_k=\nu f(X_k)+(1-\nu)\frac{1}{\Lambda_k^2}
     $$
    with $\nu\in(0,1)$ such that 
     $$
     \frac{\nu}{1-\nu}>\max\Big\{\frac{4}{\gamma^2\zeta C_1},\frac{16}{\gamma^2\eta_1\eta_2 \kappa_H^2},\frac{1}{\gamma^23\kappa_f}\Big\} 
        $$
     where
     $$
     \zeta\geq \kappa_g+\max\Big\{\eta_2,\frac{64\kappa_f/\kappa_H^2+L_{\coarse}}{1-\eta_1}\Big\}.
     $$
     Under this assumption, the results in the proof of \cite[Theorem 4.11]{storm} hold with 
     \begin{align*}
         b_1&:=(1-\nu)(\gamma^2-1)\frac{1}{\lambda_k^2},\\
         b_2&:=-\nu C_1 \|\nabla_x f(x_k)\|\frac{1}{\lambda_k}+(1-\nu)\left(\frac{1}{\gamma^2}-1\right)\frac{1}{\lambda_k^2},\\
         b_3&:=\nu C_3 \|\nabla_x f(x_k)\|\frac{1}{\lambda_k}+(1-\nu)\left(\frac{1}{\gamma^2}-1\right)\frac{1}{\lambda_k^2},
     \end{align*}
     with $C_3:=\frac{40L_f}{\zeta}+4.$ In particular, there exists $\sigma>0$ such that for all $k$
     $$
     \E[h_{k+1}-h_k|\mathcal{F}_{k-1}^{M\, F}]\leq -\sigma{\frac{1}{\Lambda_k^2}}<0.
     $$
 \end{proof}
 The choice of $\alpha$ and $\beta$ is specified in the following corollary. 
 \begin{corollary}\label{cor}
 Let all assumptions of Theorem \ref{thm} hold. The statement of Theorem \ref{thm} holds if $\alpha$ and $\beta$ are chosen to satisfy the following conditions:
 $$
 \frac{\alpha\beta-\frac{1}{2}}{(1-\alpha)(1-\beta)}\geq\frac{\frac{40L_f}{\zeta}+4}{C_1}
 $$
 and
 $$
 (1-\alpha)(1-\beta)\leq \frac{\frac{1}{\gamma^2}-1}{\frac{1}{\gamma^4}-1+\frac{1}{\gamma^2}(40L_f+4\zeta)\max\Big\{\frac{4}{\zeta C_1},{\frac{16}{\eta_1\eta_2 \kappa_H^2}},\frac{1}{3\kappa_f}\Big\}},
 $$
 with 
$C_1=\frac{\kappa_H^2}{32}\max\Big\{\frac{L_{\phi}+\theta}{L_{\phi}+\theta+\kappa_g},\frac{64\kappa_f}{64\kappa_f+\kappa_g\kappa_H^2},\frac{{2L_{\coarse}}}{{2L_{\coarse}}+\kappa_g}\Big\}$ and $\zeta=\kappa_g+\eta_2$.
 \end{corollary}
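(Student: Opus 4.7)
The plan is to show that the two inequalities on $\alpha$ and $\beta$ stated in the corollary are exactly what is needed to run the supermartingale argument underlying Theorem \ref{thm}, namely to guarantee the existence of $\sigma>0$ with
$\E[h_{k+1}-h_k\,|\,\mathcal{F}_{k-1}^{M\,F}]\leq -\sigma/\Lambda_k^2$ for all $k$, where $h_k=\nu f(X_k)+(1-\nu)/\Lambda_k^2$. The strategy mirrors that of the analogous corollary in \cite{storm}, but with the constants inherited from Lemmas \ref{lemma4.5_STORM}, \ref{lem:46storm}, and \ref{lem:48storm}, and with the adjustments made visible in the proof of Theorem \ref{thm}.

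First I would partition the sample space at iteration $k$ according to the four disjoint events defined by whether $I_k$ (fine model is $\kappa$-fully linear) and $J_k$ (function estimates are $\epsilon_f$-accurate) occur. By Assumption \ref{hp:ab} the conditional probabilities satisfy $\mathbb{P}(I_k\cap J_k|\mathcal{F}_{k-1}^{M\,F})\geq \alpha\beta$, $\mathbb{P}(I_k^c\cap J_k^c|\mathcal{F}_{k-1}^{M\,F})\leq (1-\alpha)(1-\beta)$, and the two mixed events sum to the remaining mass. On $I_k\cap J_k$, Lemma \ref{lem:46storm} together with the lemma guaranteeing successful iterations produces a decrease in $f$ of order $C_1\|\gradf\|/\Lambda_k$ and a multiplicative shrinkage $\Lambda_{k+1}=\gamma\Lambda_k$, yielding a $b_2$-type contribution. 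On $I_k^c\cap J_k^c$ one only controls the worst case: an unsuccessful iteration with $\Lambda_{k+1}=\gamma^{-1}\Lambda_k$, whose cost is captured by $b_1$. On the mixed events, the rough Lipschitz bound on $f$ combined with the regularization update gives a $b_3$-type contribution with the constant $C_3=40L_f/\zeta+4$ identified in the proof of Theorem \ref{thm}.

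Taking conditional expectations and splitting cases on whether $\|\gradf\|\geq \zeta/\Lambda_k$ or not, I would assemble
\[
\E[h_{k+1}-h_k|\mathcal{F}_{k-1}^{M\,F}] \leq \alpha\beta\, b_2 + \bigl[\alpha(1-\beta)+(1-\alpha)\beta\bigr]\, b_3 + (1-\alpha)(1-\beta)\, b_1.
\]
In the large-gradient regime ($\|\gradf\|\geq \zeta/\Lambda_k$) the $\nu\|\gradf\|/\Lambda_k$ linear terms dominate, and the net coefficient is nonpositive precisely when $\alpha\beta\, C_1 \geq \bigl[\alpha(1-\beta)+(1-\alpha)\beta\bigr]\, C_3$; rearranging and using $\alpha(1-\beta)+(1-\alpha)\beta\leq 2(1-\alpha)(1-\beta)+(2\alpha\beta-1)$ rewrites this condition in the compact form of the first inequality of the corollary. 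In the small-gradient regime, the quadratic $1/\Lambda_k^2$ terms dominate: the negative contribution from $b_2$ (bounded below via Lemma \ref{lem:48storm}, giving a term proportional to $\nu\eta_1\eta_2\kappa_H^2/\Lambda_k^2$) together with the negative $(1-\nu)(1/\gamma^2-1)/\Lambda_k^2$ portion of $b_2$ and $b_3$ must outweigh the positive $(1-\nu)(\gamma^2-1)/\Lambda_k^2$ contribution of $b_1$ scaled by $(1-\alpha)(1-\beta)$; a direct algebraic manipulation, using the lower bound on $\nu/(1-\nu)$ from the proof of Theorem \ref{thm}, produces exactly the second stated inequality.

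Finally, taking $\sigma>0$ as the minimum of the two positive margins so obtained, the hypothesis of Theorem \ref{thm} is met, and the conclusion $\sum_{k=0}^\infty 1/\Lambda_k^2<\infty$ almost surely follows. The main obstacle is the careful bookkeeping of constants across the four cases and the verification that the factor $\max\{4/(\zeta C_1),16/(\eta_1\eta_2\kappa_H^2),1/(3\kappa_f)\}$ appearing in the denominator of the second inequality exactly matches the lower bound on $\nu/(1-\nu)$ selected inside the proof of Theorem \ref{thm}; once this matching is checked, both inequalities are what is needed for the supermartingale inequality to hold in all regimes simultaneously.
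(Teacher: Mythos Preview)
Your overall strategy---running the four-case supermartingale analysis of \cite[Theorem~4.11]{storm} with the quantities $b_1,b_2,b_3$, $C_1$, $C_3=40L_f/\zeta+4$ and the lower bound on $\nu/(1-\nu)$ recorded in the proof of Theorem~\ref{thm}---is exactly what the paper has in mind; the corollary is stated without proof as the direct analogue of \cite[Corollary~4.12]{storm}. However, the way you attach the bounds $b_1$ and $b_3$ to the events is inverted, and this prevents you from arriving at the stated conditions.

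On $I_k^c\cap J_k^c$ the worst outcome is \emph{not} an unsuccessful iteration. If the step is rejected then $x_{k+1}=x_k$, $\Lambda_{k+1}=\gamma^{-1}\Lambda_k$, and the change in $h_k$ equals $(1-\nu)(\gamma^2-1)/\Lambda_k^2=b_1<0$, which is favourable. The genuine danger on $I_k^c\cap J_k^c$ is a \emph{falsely successful} iteration: an inaccurate model together with inaccurate estimates can cause acceptance of a step along which $f$ increases (only the Lipschitz constant of $\nabla f$ controls this, which is where $C_3$ arises), while simultaneously $\Lambda$ decreases. That is precisely the $b_3$ contribution, and it must be weighted by $(1-\alpha)(1-\beta)$; this is why that product sits in the denominator of the first displayed condition and on the left of the second. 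On the mixed events, by contrast, at least one safeguard is active: on $I_k^c\cap J_k$ accurate estimates ensure, via Lemma~\ref{lem:48storm}, that any accepted step still decreases $f$; on $I_k\cap J_k^c$ the accurate model ensures, via Lemma~\ref{lemma4.5_STORM}, that the step itself decreases $f$. Hence the mixed events carry a $b_1$- or $b_2$-type bound, never $b_3$. With your swapped assignment the resulting conditions on $\alpha,\beta$ would be strictly stronger than, and would not simplify to, those in the corollary.

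A secondary point: the inequality $\alpha(1-\beta)+(1-\alpha)\beta\le 2(1-\alpha)(1-\beta)+(2\alpha\beta-1)$ that you use to rewrite the first condition is false in general (take $\alpha=\beta=0.6$). The relevant identity is $\alpha(1-\beta)+(1-\alpha)\beta=1-\alpha\beta-(1-\alpha)(1-\beta)$. Once the case assignment above is corrected, the bookkeeping of the $(1-\nu)(1/\gamma^2-1)/\Lambda_k^2$ and $(1-\nu)(\gamma^2-1)/\Lambda_k^2$ pieces across the four events, combined with the lower bound on $\nu/(1-\nu)$ from the proof of Theorem~\ref{thm}, yields the two displayed inequalities exactly as in \cite{storm}.
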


 The following results can be derived as in  \cite[Lemma 4.17]{storm} and \cite[Theorem 4.18]{storm}, their proof is therefore omitted for sake of brevity.

 \begin{theorem}
     Let the assumptions of Theorem \ref{thm} hold. Let $\{X_k\}$ and $\{\Lambda_k\}$ be the sequences of random iterates and random regularization parameters generated by  Algorithm \ref{alg:STGM}. Fix $\epsilon>0$ and define the sequence $\{K_\epsilon\}$ consisting of the natural numbers $k$ for which $\|\nabla_xf(X_k)\|>\epsilon$. Then  almost surely 
     $$
     \sum_{k\in\{K_\epsilon\}}\frac{1}{\Lambda_k}<\infty.
     $$
 \end{theorem}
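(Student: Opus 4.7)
The plan is to refine the one-step supermartingale inequality from the proof of Theorem \ref{thm} so that, whenever $k \in K_\epsilon$, the expected decrease of the potential $h_k = \nu f(X_k) + (1-\nu)/\Lambda_k^2$ is proportional to $1/\Lambda_k$ rather than only $1/\Lambda_k^2$. First I would partition the sample space by the indicator events $I_k$ (the fine-level model is $(\kappa_f,\kappa_g)$-fully linear) and $J_k$ (the estimates are $\epsilon_f$-accurate), whose conditional probabilities are controlled by Assumption \ref{hp:ab}. Within each cell I would further distinguish according to whether $1/\Lambda_k$ lies below or above the thresholds \eqref{hpl2} and \eqref{hp} that unlock Lemmas \ref{lem:46storm} and \ref{lem:48storm}.

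On the jointly good cell $I_k \cap J_k$, if $1/\Lambda_k$ is small enough relative to $\|\nabla_x f(X_k)\|$, then Lemma \ref{lem:46storm} yields $f(X_{k+1}) - f(X_k) \leq -C_1 \|\nabla_x f(X_k)\|/\Lambda_k$ and the iteration is necessarily successful, so $\Lambda_{k+1} = \gamma \Lambda_k$ and $h_{k+1} - h_k$ inherits a term of size $-\nu C_1 \epsilon/\Lambda_k$. If instead $1/\Lambda_k$ is bounded below by a constant multiple of $\|\nabla_x f(X_k)\|$, the $1/\Lambda_k^2$ decrease given by Lemma \ref{lem:48storm} is itself already of order $\|\nabla_x f(X_k)\|/\Lambda_k \geq \epsilon/\Lambda_k$. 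On the cells where $I_k$ or $J_k$ fails, one can only afford the crude worst-case bound: $f(X_{k+1}) - f(X_k)$ is at most of order $\|\nabla_x f(X_k)\|/\Lambda_k$ (obtained from Lipschitz smoothness in Assumption \ref{hp_lip_grad} together with the step-size control $\|S_k\| = \mathcal{O}(1/\Lambda_k)$ which, for the coarse step, comes from \eqref{rel_g_s2}), plus the deterministic $(\gamma^{-2}-1)/\Lambda_k^2$ change of the regularization term.

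Taking conditional expectations and using the probability thresholds $\alpha,\beta$ from Corollary \ref{cor}, I would obtain, for every $k \in K_\epsilon$, an inequality of the form
\begin{equation*}
\mathbb{E}\!\left[h_{k+1} - h_k \mid \mathcal{F}^{M\, F}_{k-1}\right] \leq -\sigma' \frac{\epsilon}{\Lambda_k},
\end{equation*}
for some $\sigma' > 0$. Since $h_k$ is bounded below by $\nu \inf f$, the non-negative almost-supermartingale convergence theorem, invoked exactly as in Theorem \ref{thm}, yields $\sum_{k \in K_\epsilon} \mathbb{E}[1/\Lambda_k \mid \mathcal{F}^{M\, F}_{k-1}] < \infty$ and therefore $\sum_{k \in K_\epsilon} 1/\Lambda_k < \infty$ almost surely.

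The main obstacle is the probabilistic bookkeeping on the cells where $I_k$ or $J_k$ fails: on such iterations the potential can \emph{increase}, and one must ensure that this possible increase, weighted by $(1-\alpha)(1-\beta)$ and by the mixed probabilities, is outweighed by the guaranteed decrease on the good cell, weighted by $\alpha\beta$. This is exactly the inequality that Corollary \ref{cor} already enforces; the extra hypothesis $\|\nabla_x f(X_k)\| > \epsilon$ valid on $K_\epsilon$ only strengthens the negative term, so the same choice of $\alpha,\beta$ continues to work, and the conclusion follows.
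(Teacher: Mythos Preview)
Your sketch is correct and follows essentially the same route as the paper, which omits the proof and refers to \cite[Lemma 4.17]{storm}: refine the case analysis from the proof of Theorem~\ref{thm} so that on iterations in $K_\epsilon$ the negative contributions $b_2$ scale like $\|\nabla_x f(X_k)\|/\Lambda_k \geq \epsilon/\Lambda_k$, check that the same $\alpha,\beta$ from Corollary~\ref{cor} make the good cells dominate the bad ones, and conclude via the supermartingale argument. One small clarification: to assert that the iteration is successful on $I_k\cap J_k$ when $1/\Lambda_k$ is below the threshold, you need the (unlabelled) lemma with hypothesis~\eqref{hp}, not Lemma~\ref{lem:46storm} alone; otherwise the bookkeeping is as you describe.
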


 \begin{theorem}
Let the assumptions of Theorem \ref{thm} hold. Let $\{X_k\}$ be the sequence of random iterates generated by Algorithm \ref{alg:STGM}. Then, almost surely,
$$
\lim_{k\rightarrow\infty}\|\nabla_x f(X_k)\|=0.
$$
 \end{theorem}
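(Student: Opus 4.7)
The plan is to follow the two-stage scheme of \cite[Theorem 4.18]{storm}, adapted to our regularized multilevel setting and powered by the summability $\sum_{k\in K_\epsilon}1/\Lambda_k<\infty$ almost surely from the previous theorem. First I would establish $\liminf_{k\to\infty}\|\nabla_x f(X_k)\|=0$ almost surely, and then upgrade this to $\lim_{k\to\infty}\|\nabla_x f(X_k)\|=0$ via a standard crossing argument on excursions of the gradient norm between the thresholds $\epsilon_0/2$ and $\epsilon_0$.

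For the \emph{liminf} stage, I argue by contradiction on a positive-probability event $\mathcal{E}$ on which there exist $\epsilon>0$ and a (random) index $N$ with $\|\nabla_x f(X_k)\|>\epsilon$ for all $k\geq N$. On $\mathcal{E}$, every $k\geq N$ lies in $K_\epsilon$, so $\sum_{k\geq N}1/\Lambda_k<\infty$, forcing $\Lambda_k\to\infty$ fast enough for this series to converge. Since $\Lambda_k$ can only grow (by a factor $1/\gamma$) on unsuccessful iterations, such growth requires unsuccessful iterations to dominate the tail. But under Assumption~\ref{hp:ab} with $\alpha,\beta$ chosen as in Corollary~\ref{cor}, the conditional probability of $I_k\cap J_k$ exceeds $\alpha\beta>1/2$; and on $I_k\cap J_k$, once $\Lambda_k$ exceeds the threshold in \eqref{hp}, the lemma preceding Lemma~\ref{lem:48storm} guarantees that iteration $k$ is successful. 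A Borel--Cantelli / submartingale argument in the style of \cite[Lemma 4.17]{storm} then produces infinitely many successful iterations on $\mathcal{E}$, contradicting the required growth of $\Lambda_k$.

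For the \emph{limsup} stage, suppose for contradiction that on some positive-probability event $\limsup_k\|\nabla_x f(X_k)\|\geq\epsilon_0>0$. Together with $\liminf=0$, one extracts random index pairs $(\ell_n,u_n)$, $\ell_n<u_n$, with $\|\nabla_x f(X_{\ell_n})\|<\epsilon_0/2$, $\|\nabla_x f(X_{u_n})\|\geq\epsilon_0$, and $\|\nabla_x f(X_k)\|\geq\epsilon_0/2$ for all $\ell_n<k\leq u_n$; in particular every index in $\{\ell_n,\ldots,u_n-1\}$ on which the iterate actually moves lies in $K_{\epsilon_0/2}$. A uniform bound $\|S_k\|\leq C/\Lambda_k$ (valid for the fine step by construction since $\|s_k\|=1/\lambda_k$, and for the coarse step from the upper bound in \eqref{rel_g_s2} combined with $\|P_k\|\leq\kappa_R$ whenever \eqref{hplc} holds) then yields, via Lipschitz continuity of $\nabla_x f$ and telescoping,
\begin{equation*}
\tfrac{\epsilon_0}{2}\;\leq\;\|\nabla_x f(X_{u_n})-\nabla_x f(X_{\ell_n})\|\;\leq\;L_f\sum_{k=\ell_n}^{u_n-1}\|X_{k+1}-X_k\|\;\leq\;L_f C\sum_{\substack{k\in K_{\epsilon_0/2} \\ \ell_n\leq k<u_n}}\frac{1}{\Lambda_k}.
\end{equation*}
Since $\sum_{k\in K_{\epsilon_0/2}}1/\Lambda_k<\infty$ almost surely, the right-hand side tends to zero as $n\to\infty$, producing the required contradiction.

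The main obstacle is the measure-theoretic bookkeeping: the excursion indices $(\ell_n,u_n)$ are random and must be set up as stopping-time-like quantities so that the a.s.\ summability from the previous theorem can be invoked along them on the positive-probability ``bad'' event. A secondary technical point is the uniform step-size bound in the coarse branch, where \eqref{hplc} is used implicitly when appealing to \eqref{rel_g_s2}: one must verify, using that $\|\nabla_x f(X_k)\|\geq\epsilon_0/2$ inside the excursion together with the fully-linear bound \eqref{flin}, that for $\Lambda_k$ sufficiently large \eqref{hplc} indeed holds whenever the coarse branch is chosen. Both technicalities are routine extensions of the corresponding arguments in \cite[Lemma 4.17 and Theorem 4.18]{storm}.
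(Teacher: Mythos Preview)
Your proposal is correct and takes essentially the same approach as the paper: the paper itself omits the proof entirely, stating that it ``can be derived as in \cite[Lemma 4.17]{storm} and \cite[Theorem 4.18]{storm}'', and your sketch faithfully follows precisely that STORM scheme adapted to the regularized multilevel setting. Your identification of the two technical wrinkles (the stopping-time bookkeeping for the random excursion indices, and the need to verify \eqref{hplc} on successful coarse iterations so that \eqref{rel_g_s2} applies, which follows from the acceptance condition $\|g_k\|\geq\eta_2/\lambda_k$ combined with the lower bound on $\eta_2$ in Theorem~\ref{thm}) is apt, and both are indeed routine.
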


\section{\name{} for finite sum minimization }\label{sec:multilevel}

In this section we describe how to adapt Algorithm \ref{alg:STGM} to the solution of finite sum minimization problems of the form
\begin{equation}\label{pb_opti_fs}
\min_{x \in  \mathbb{R}^n} F(x) = \frac{1}{N}\sum_{i=1}^N f^{(i)}(x)
\end{equation}
 using a multilevel setting with $\ell$ levels. We assume that $N\gg n$ and we consider hierarchies built just in the samples space, thus at each level the iterates belong to $\mathbb{R}^n$ and the operators $R_k$ and $P_k$ are the identity. We thus drop here the indexes $\ell$ from the iterates and the steps.

Since the objective function in  \eqref{pb_opti_fs} is the average of the set of functions $\graffe{f^{(i)}}_{i=1}^N$, we can easily define a hierarchy of approximations by subsampling.
In particular, given the number of levels $\ell_{\max}\geq 2$, for every $\ell \in \graffe{1,...,\ell_{\max}}$ we define the subsampled function as:
\begin{equation}\label{coarse_model_l}
       \fsl(x) := \frac{1}{\lvert \Sub_k^\ell \lvert} \sum_{i\in \Sub_k^\ell}f^{(i)}(x);
\end{equation}
where $\Sub_k^\ell $ is a subsample set such that $\emptyset \neq \Sub_k^1 \subset ...\subset \Sub_k^\ell \subset...\subset \Sub_k^{\ell_{\max - 1}}\subset \Sub_k^{\ell_{\max}} \subseteq \graffe{1,...,N}$.

The stochastic framework of Algorithm \ref{alg:STGM} allows for inexact approximations of $F$ at the finest level, thus avoiding the need of the full sample evaluation at each iteration. Following \cite{storm}, in order to ensure that the model at fine level remains a fully linear model for $F$ we 
choose,  at iteration $k$, $\Sub_k^{\ell_{\max}}$ a set of randomly drawn samples  such that $\lvert\mathcal{S}_k^{\ell_{\max}}\rvert=p_k:=\min\{N,\max\{100k+n+2,\lambda_k^2\}\}$. The lower level approximations $\phi_k^{\ell}$ are defined as in \eqref{coarse_model_l} with $\Sub_k^\ell\subset \Sub_k^{\ell_{\max}}$ and $\lvert \Sub_k^\ell\rvert$ a fixed fraction\footnote{Note that they could as well be chosen constant.} of $p_k$. 
The cardinality  of the sample set thus changes at each fine iteration, and the increase is much slower at the coarse levels.

In order to define the objective functions for each level $\ell$, we introduce the following notation. Given a function $g$, we define $\quadre{g}^{\Sub}$ its subsampled version, i.e., if $g$ is the average of functions $g^{(i)}$ over a sample set including  $\Sub$, $\quadre{g}^{\Sub}$ is the average of the $g^{(i)}$ restricted to just the samples in  $\Sub$. If $g$ is not defined on a sample set, $\quadre{g}$ is just $g$.

We use the functions $\graffe{\fsl}_{\ell =1}^{\ell_{\max}}$ to define the regularized models that are minimized at each level in a recursive way.
In particular, at level $1<\ell\leq \ell_{\max}$, given the objective function of that level $f_k^\ell$ and an iterate $x_k$, we define the objective function $m_k^{R,\ell-1}$ at $x_k$ for the lower level $\ell-1$  as 
\begin{equation}\label{mRsf}
  m_k^{R,\ell-1}(s)=\quadre{\fl_k}^{\Sub_k^{\ell-1}}(x_k+s)+(v_k^{\ell-1})^Ts+\frac{1}{2}\lambda_k^{\ell}\norm{\nabla_x\fl_k(x_k)}\|s\|^2
\end{equation}
with $\lambda_k^\ell>0$ and
\begin{equation}\label{corr_vec}
  v^{\ell-1}_k = \nabla_x \fl_k(x_k)-\nabla_x \quadre{\fl_k}^{\Sub^{\ell-1}}(x_k).
\end{equation}

At the finest level $\fl_k=f^{\Sub_k^{\ell_{\max}}}$, thus $\quadre{f^{\Sub_k^{\ell_{\max}}}}^{\Sub_k^{\ell_{\max}-1}}$ is simply $f^{\Sub_k^{\ell_{\max}-1}}$. However, when $\ell<\ell_{\max}$, $\fl_k$ represents the regularized model built at the immediately upper level $\ell+1$ and incorporates also the regularization and the vector $v_k^{\ell+1}$. Given that this quantities are not defined on a samples set, the subsampled version of $f_k^\ell$ will contain the term $f^{\Sub_k^{\ell}}$ that is subsampled on ${\Sub_k^{\ell-1}}$, while the correction and the regularization vectors remain unchanged\footnote{Notice that each time we go down a level we accumulate in the regularized model a regularization term and a correction vector.}.

\begin{algorithm}[h!]
\caption{\name{} for finite-sum minimization - \name{}($\ell$,$x_0$,$f^\ell$,$\epsilon^{\ell}$,$\lambda^\ell$,$\mathcal{S}^{\ell}$)}\label{alg2: STREG}
\begin{algorithmic}[1]
\small
\Statex \textbf{Input:} {$\ell$ index of the level, $x_0 \in\R^{n}$ starting point, 
$f^\ell:\mathbb{R}^{n}\rightarrow\mathbb{R}$ objective function for level $\ell<\ell_{\max}$,  $\epsilon^{\ell}>0$ tolerance for the stopping criterion, $\lambda^{\ell}>0$, $\Sub^\ell$ sample set for the current level.
}
\Statex Given $0<\eta_1\leq\eta_3<1$, $\eta_2>0$, $0<\gamma_2\leq\gamma_1< 1<\gamma_3$.
\State Set $k=0$,  $\lambda_0^\ell=\lambda^\ell$ and $\mathcal{S}^{\ell}_0=\mathcal{S}^\ell$.
\While{the stop criterion for level $\ell$ is not satisfied}\label{alg2: stop_crit}
\Statex \textbf{Sample set construction}
\If{$\ell=\ell_{\max}$}
\State Set $p_k:=\min\{N,\max\{100k+n+2,\lambda_k^2\}\}$ and build $\Sub_k^{\ell_{\max}}\subseteq \{1,\dots,N\}$ drawing $p_k$ indices randomly. Set   $f_k^{\ell_{\max}}=f^{\Sub_k^{\ell_{\max}}}$ as in \eqref{coarse_model_l}.
\Else
\State Set $f_k^\ell=f^\ell$.
\EndIf
\Statex \textbf{Model choice} \label{alg2: model choice} 
\If{$\ell >1$}
\State Choose to go to Step \ref{alg2: Taylor step} or to Step \ref{alg2:  random subsamp}.
 \Else
 \State Go to Step \ref{alg2: Taylor step}.
 \EndIf
\Statex \textbf{Regularized Taylor step} 
\State \label{alg2: Taylor step} Define $m_{k}^\ell(s) = \fl_k(x_k)+\nabla_x\fl_k(x_k)^Ts$. Set $s_k = -\frac{\nabla_x\fl_k(x_k)}{\lambda^\ell_k\norm{\nabla_x\fl_k(x_k)}}$. Go to Step \ref{alg2: Acceptance step}.
\Statex \textbf{Sub-sampled model }
\State \label{alg2: random subsamp} Build $\Sub_k^{\ell-1}\subset \Sub_k^{\ell}$  randomly. Set $f^{\mathcal{S}_k^{\ell-1}}$ as in \eqref{coarse_model_l}.
\State \label{alg2: sub-level mod definition} Build the lower level approximation $f^{\ell-1}_k$ of $f^\ell_k$ from $f^{\mathcal{S}_k^{\ell-1}}$ (cf. discussion in section \ref{sec:multilevel}, below \eqref{corr_vec}.).  
Compute the correction vector $\vlmone$ as in \eqref{corr_vec}.
Define the lower level model $\varphi_k^{\ell-1}(s)$ and its regularization $m_k^{R,\ell-1}(s)$ as
\begin{align*}
   \varphi_k^{\ell-1}(s)& = f_k^{\ell-1}(x_k+s)+(v_k^{\ell-1})^Ts;\\
   	m_k^{R,\ell-1}(s) & = \varphi_k^{\ell-1}(s) +  \frac{1}{2}\lambda^\ell_k\norm{\nabla_x f_k^{\ell}(x_k)} \norm{s}^2.
\end{align*}
\State \textbf{Recursive call}
\State\label{alg2: recursive step} 
Choose $\epsilon^{\ell-1}$ 
and call 
\name{}($\ell-1$,$0$,$m_k^{R,\ell-1}$,$\epsilon^{\ell-1}$,$\lambda_k^\ell$,$\mathcal{S}_k^{\ell-1}$) to find an approximate solution $s^*$ of the problem 
\begin{equation*}
  \min_{s\in\R^n}m_k^{R,\ell-1}(s),
\end{equation*}
such that condition \eqref{stop_ell} is satisfied.
\State \label{alg2: set rec search direction} Set $s_k=s^*$ and $m_{k}^\ell(s) = \varphi_k^{\ell-1}(s)$. 
\Statex \textbf{Step acceptance of trial point}
\State \label{alg2: Acceptance step} Compute $\rho_k^\ell := \frac{\fl_k(x_k)-\fl_k(x_k+s_k)}{m_{k}^\ell(0)-m_{k}^\ell(s_k)}$.\label{defrho}
\If{$\rho^{\ell}_k\geq \eta_1$ and $\norm{\nabla_x \fl_k(x_k)} \geq \eta_2/\lambda^\ell_k$}\label{alg2: sol param update begin}
\State $x_{k+1} = x_k + s_k$
\Else \State $x_{k+1} = x_k$
\EndIf
\Statex \textbf{Regularization parameter update}
\If{$\rho_k^\ell\geq \eta_1$ and $\norm{\nabla_x \fl_k(x_k)} \geq \eta_2/\lambda^\ell_k$}\label{alg2: reg param update}
\State $$\lambda^\ell_{k+1} =
			\bigg \{
			\begin{array}{ll}
			\max\{\lambda_{\min},\gamma_2\lambda^\ell_k\},  & \text{ if }\rho_k^\ell\geq \eta_3, \\
			\max\{\lambda_{\min},\gamma_1\lambda^\ell_k\},  & \text{ if } \rho_k^\ell< \eta_3\\
			\end{array}
			$$ 
\Else
\State $\lambda^\ell_{k+1}= \gamma_3\lambda^\ell_k$. \label{alg2: sol param update end}
\EndIf
\State $k = k + 1$
\EndWhile
\end{algorithmic}
\end{algorithm}

The stopping criterion depends on the level. At fine level it checks if the norm of the approximated gradient is below some tolerance  $\epsilon$:
\begin{equation}\label{stop_crit_elllmax}
    \norm{\nabla_x f^{\mathcal{S}_k^{\ell_{\max}}}(x_k)
} \leq \epsilon.
\end{equation}
If the full gradient is not evaluated (i.e., $|\mathcal{S}_k^{\ell_{\max}}|<N$), case expected for most of the fine iterations, the stopping criterion (\ref{stop_crit_elllmax}) might not be meaningful. 
Therefore we use a heuristic stopping test. When \eqref{stop_crit_elllmax} is satisfied for the first time, after a fine or a coarse step, a new set of $p_k$ randomly chosen samples is drawn and fine steps are taken until  (\ref{stop_crit_elllmax}) is satisfied again. 
In practice the stopping criterion is however, in most cases,  satisfied when the full sample size is reached. 
A safeguard is also added that imposes a maximum  number of fine iterations. 
If $\ell<\ell_{\max}$, we use the stopping condition \eqref{stop_ell}
where $m_k^{R,\ell}$ is defined in \eqref{mRsf}, with $\epsilon^{\ell-1} >0$, and impose a maximum number $\rm{maxit}_\ell$ of iterations.

We report in Algorithm \ref{alg2: STREG} the complete \name{} algorithm
for problem  \eqref{pb_opti_fs}.
The call to the algorithm at fine level is  \name{}($\ell_{\max}$,$x_0$,-,$\epsilon^{\ell_{\max}}$,$\lambda_0^{\ell_{\max}}$,-), with $x_0\in\mathbb{R}^n$. 
Note that the choice of the alternate scheme between the coarse and fine steps is left to the user.

\section{Numerical experiments}\label{sec:num}

In this section we illustrate the performance of \name{} for the solution  of finite sum minimization problems (\ref{pb_opti_fs}) arising in  binary classification.

\paragraph{Considered methods}
We use 3 levels  (adding more levels did not improve the results) and we rename our method \nametre{}. We compare \nametre{} against the 1-level \nameuno{}. We remark that \nameuno{} can be interpreted  as a STORM-like approach
where the trust region constraint is replaced by a quadratic regularization, resulting in fact in an adaptive sampling strategy.

We also consider two first-order stochastic algorithms widely used in the solution of (\ref{pb_opti_fs}):
SVRG \cite{johnson2013accelerating} and Adagrad~\cite{duchi2011adaptive,mcmahan2010adaptive}.
The choice of SVRG is due to the  variance reduction interpretation of multilevel methods mentioned in the introduction, while Adagrad was chosen among the Ada-like methods for its good complexity properties \cite{gratton2024complexity}. As our approach, Adagrad uses an adaptive strategy for the stepsize, while SVRG uses a constant steplength (or learning rate) $\alpha$. Both algorithms use a mini-batch of size $b$ for evaluating the approximated gradient but SVRG also need to re-evaluate the full gradient every $m$  iterations \cite{johnson2013accelerating}.

\paragraph{Performance measures}
In order to compare the efficiency of the various methods, we consider the number of weighted gradient and function evaluations performed during the execution: a full-gradient evaluation is counted as 1, while a function evaluation is counted as $\frac{1}{n}$, taking into account that the size of the gradients is $n$.  Therefore, the  sub-sampled gradients are weighted as $\frac{\lvert \Sub_k^{\ell}\rvert}{N}$, the size of the sub-sample set. 
From now on,  we will refer to this measure as computational effort or more simply weighted number of evaluations, which will be denoted by \textbf{\#f/g}.

Beside the efficiency of the methods, we also take into account the quality of the solutions found.
In particular, we consider the classification accuracy (in percentage) on the testing set that will be denoted by \textbf{\%tA}.

\paragraph{Implementation issues } Both the versions of 
\name{} with one and three levels have been implemented following  Algorithm \ref{alg2: STREG} with parameters chosen as follows:
$$\eta_1= 0.5,\ \eta_2 = 10^{-3}, \ \eta_3=0.75, \ \gamma_1= 0.5,\ \gamma_2 = 0.3,\ \gamma_3=2,\ \lambda_{\min}= 10^{-4}.$$
  Moreover, for the 3 level version, we set $\lvert \Sub_k^1\rvert= 0.001\, p_k$ and $\lvert \Sub_k^2\rvert= 0.01\, p_k$.
The recursion scheme encompasses a fine step after each recursive call, as depicted in Figure \ref{fig:schema3} where the horizontal arrows represent the fine steps. 
\begin{figure}[h]
\centering
\includegraphics[width=0.8\textwidth]{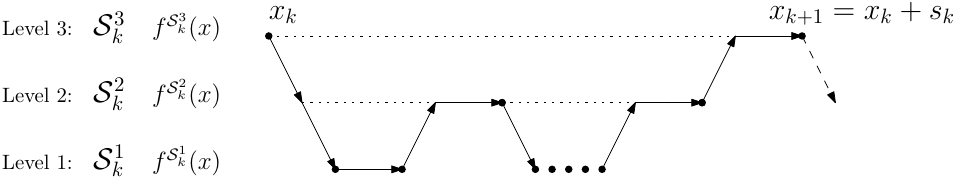}
\caption{Iteration scheme used in our implementation of \name\ for problem \eqref{pb_opti_fs}.} \label{fig:schema3}
\end{figure}

Algorithm \ref{alg2: STREG} terminates when condition
\eqref{stop_crit_elllmax} holds with $\epsilon = 10^{-3}$ and we set $\epsilon^{\ell-1} = 10^{-3}$ in \eqref{stop_ell} and  a maximum number of coarse iterations $\text{maxit}_{\ell}=5$.
 Finally, we set  $\lambda_0^\ell= 10^{-4}$ for $\ell>1$ and $\lambda_0^1 = 10^{-3}$.

For the implementation of SVRG and Adagrad, we followed \cite{reddi2016stochastic,ward2020adagrad}  and, also taking into account our own extensive tuning work, we set the mini-batch size $b=20$ for both methods and chose the learning rate $\alpha = 0.01$  and $m = N/b$ for SVRG. Regarding the stopping criterion, SVRG terminates when the norm of the gradient computed on the full sample set is below  $\epsilon = 10^{-3}$ or $10^4$ iterations are performed. Imposing a stopping criterion on Adagrad is less obvious and we only set a maximum number of weighted number of gradients and functions.

All algorithms  have been implemented in MATLAB R2024a using HPE ProLiant DL560 Gen10 with 4 Intel(R) Xeon(R) Gold 6140 CPU @ 2.30GHz with 512 Gb RAM\footnote{We kindly acknowledge the Department of Mathematics of the University of Bologna for making the department's HPC resources available for this work.}.

\paragraph{Results on binary classification problems} \label{sec:expfs}

We consider a binary classification problem where the objective function 
 is obtained by composing a least-squares loss with the sigmoid function, that is 
\begin{equation}\label{prob: nonlin least squares}
    \min_{x \in  \mathbb{R}^n}  F(x) =  \frac{1}{2N}\sum_{i=1}^N \tonde{y_i - \frac{1}{1 + \text{exp}(-y_ix^Tz_i)}}^2, \tag{Pb-FS}
\end{equation}
with $(z_i,y_i)\in \R^n \times \{0,1\}$, for every $i=1,\dots,N$. The tests are performed with four different datasets for binary classification: \mnist{} \cite{lecun1998gradient}, \mush{} \cite{mushroom_73}, \aninea{} and \ijcnnone{} \cite{CC01a}. 
The data sets are divided into a training set and a testing set as specified in Table \ref{tab: datasets}.
\begin{table}[h!]
    \centering
    \begin{tabular}{llll}
    Data set        & nr. of features ($n$)   & Training set size ($N$)& Testing set size ($N_t$) \\ \hline
    \mnist{}  & 784                     & 60000                  & 10000 \\
    \mush{}   & 112                     & 6503                   & 1621\\
    \aninea{}    & 123                     & 22793                  & 9768\\
    \ijcnnone{} & 22                      & 49990                  & 91701\\
    \end{tabular}
    \caption{Data sets with number of features $n$ and number of instances of the training set $N$ and the testing set $N_t$.}
    \label{tab: datasets}
\end{table}

For each dataset we perform five tests with random initial guesses and then we show the averaged values (see Table \ref{tab:tutti} and Figures  \ref{fig: obj. vs. ev.}-\ref{fig: acc vs. ev.}). 
In Table \ref{tab:tutti} we consider the computational effort  \textbf{\#f/g} of the solvers to  satisfy the convergence stopping criteria\footnote{Adagrad is not included, since the stopping criterion is based on a maximum number of function evaluations only.}.

\begin{table}[h!]
\centering
\footnotesize
\begin{tabular}{|lccc|}
\hline
\multicolumn{4}{|c|}{\mush{}}\\  
\hline
 & \multicolumn{1}{|c|}{SVRG }                   & \multicolumn{1}{c|}{\nameuno} & \nametre \\\hline

\multicolumn{1}{|l|}{\textbf{Avg. \%tA}}               & \multicolumn{1}{c|}{98.69}                                                   & \multicolumn{1}{c|}{97.68}                                                    & 97.74                                                    \\
\multicolumn{1}{|l|}{\textbf{StD \%tA}}                & \multicolumn{1}{c|}{0.25}                                                         & \multicolumn{1}{c|}{0.50}                                                     & 0.48                                                     \\
\multicolumn{1}{|l|}{\textbf{Avg. \#f/g}}              & \multicolumn{1}{c|}{341.89}                                                         & \multicolumn{1}{c|}{216.78}                                                   & 35.48                                                    \\
\multicolumn{1}{|l|}{\textbf{StD \#f/g}}               & \multicolumn{1}{c|}{722.65}                                                  & \multicolumn{1}{c|}{30.69}                                                    & 9.95                                                     \\ \hline
\multicolumn{4}{|c|}{\aninea{}}                                                                                                                                                                                                                                                                                     \\ \hline
 & \multicolumn{1}{|c|}{SVRG }                   & \multicolumn{1}{c|}{\nameuno} & \nametre \\\hline

\multicolumn{1}{|l|}{\textbf{Avg. \%tA}}               & \multicolumn{1}{c|}{85.00}                                                       & \multicolumn{1}{c|}{84.65}                                                    & 84.83                                                    \\
\multicolumn{1}{|l|}{\textbf{StD \%tA}}                & \multicolumn{1}{c|}{0.05}                                                          & \multicolumn{1}{c|}{0.04}                                                     & 0.07                                                     \\
\multicolumn{1}{|l|}{\textbf{Avg. \#f/g}}              & \multicolumn{1}{c|}{840.77}                                                & \multicolumn{1}{c|}{207.68}                                                   & 90.87                                                    \\
\multicolumn{1}{|l|}{\textbf{StD \#f/g}}               & \multicolumn{1}{c|}{300.51}                                                    & \multicolumn{1}{c|}{53.97}                                                    & 30.31                                                    \\ \hline
\multicolumn{4}{|c|}{\ijcnnone{}}                                                                                                                                                                                                                                                                                  \\ \hline
& \multicolumn{1}{|c|}{SVRG }                   & \multicolumn{1}{c|}{\nameuno} & \nametre \\\hline

\multicolumn{1}{|l|}{\textbf{Avg. \%tA}}               & \multicolumn{1}{c|}{91.68}                                                       & \multicolumn{1}{c|}{91.10}                                                    & 91.13                                                    \\
\multicolumn{1}{|l|}{\textbf{StD \%tA}}                & \multicolumn{1}{c|}{0.00}                                                           & \multicolumn{1}{c|}{0.25}                                                     & 0.09                                                     \\
\multicolumn{1}{|l|}{\textbf{Avg. \#f/g}}              & \multicolumn{1}{c|}{553.87}                                                       & \multicolumn{1}{c|}{6.20}                                                     & 7.09                                                     \\
\multicolumn{1}{|l|}{\textbf{StD \#f/g}}               & \multicolumn{1}{c|}{1.64}                                                              & \multicolumn{1}{c|}{0.97}                                                     & 0.64                                                     \\ \hline
\multicolumn{4}{|c|}{\mnist}                                                                                                                                                                                                                                                                                   \\ \hline
 & \multicolumn{1}{|c|}{SVRG }                   & \multicolumn{1}{c|}{\nameuno} & \nametre \\\hline

\multicolumn{1}{|l|}{\textbf{Avg. \%tA}}               & \multicolumn{1}{c|}{90.43}                                               & \multicolumn{1}{c|}{89.80}                                                    & 89.82                                                    \\
\multicolumn{1}{|l|}{\textbf{StD \%tA}}                & \multicolumn{1}{c|}{0.13}                                                     & \multicolumn{1}{c|}{0.04}                                                     & 0.05                                                     \\
\multicolumn{1}{|l|}{\textbf{Avg. \#f/g}}              & \multicolumn{1}{c|}{17843.40}                                                     & \multicolumn{1}{c|}{2923.02}                                                  & 414.15                                                   \\
\multicolumn{1}{|l|}{\textbf{StD \#f/g}}               & \multicolumn{1}{c|}{6336.56}                                                  & \multicolumn{1}{c|}{497.15}                                                   & 16.37   \\ \hline                                                
\end{tabular}
\caption{\eqref{prob: nonlin least squares} Comparison between \nameuno{}, \nametre{} and SVRG. Average of maximum classification accuracy reached and number of evaluations, with corresponding standard deviation. }
\label{tab:tutti}
\end{table}

We first observe that \nametre{} is in general much more efficient than the one-level version, while reaching the same accuracy level.
\begin{figure}[h!]
    \centering
    \includegraphics[width=\linewidth]{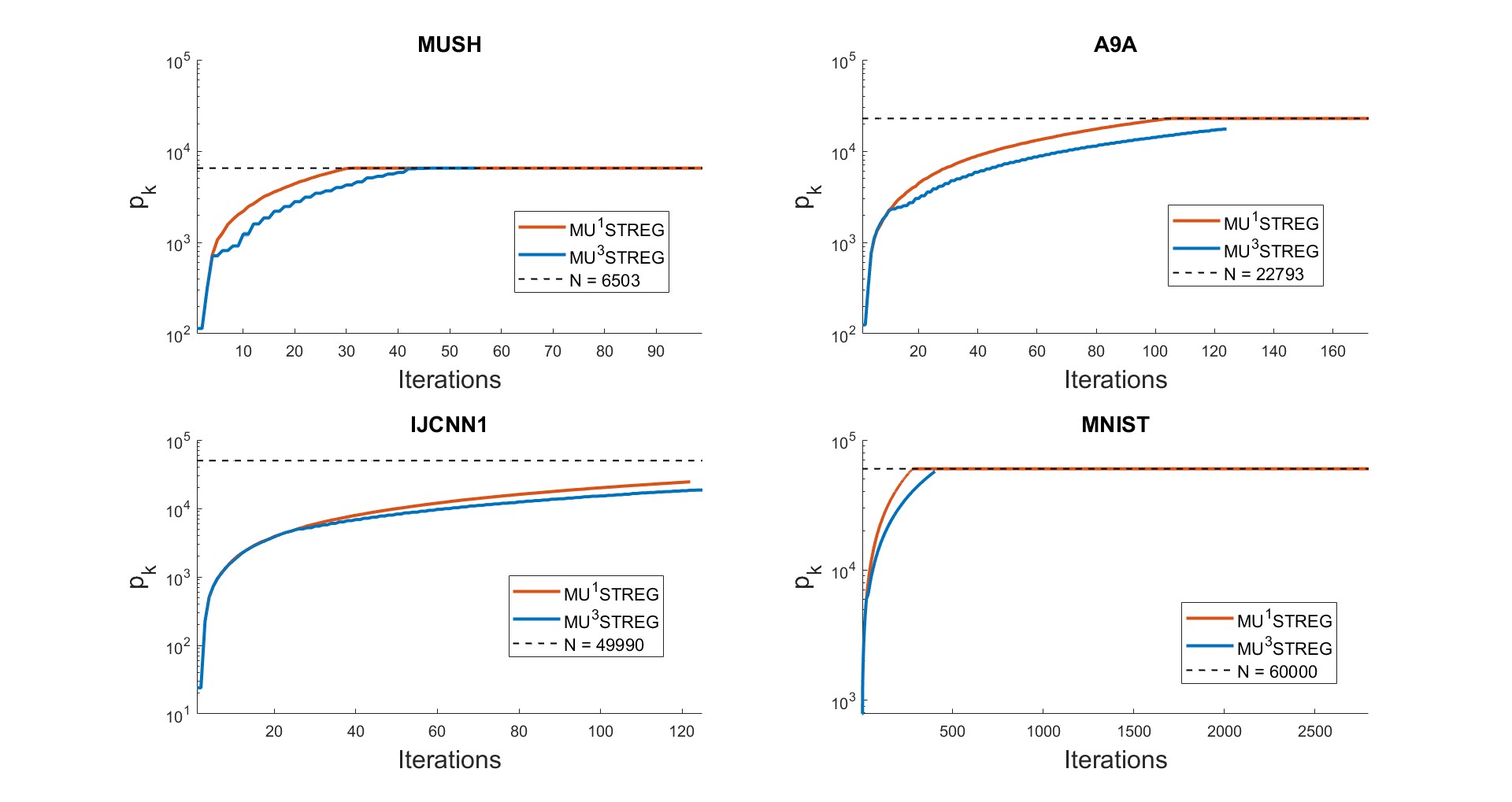}
    \caption{\eqref{prob: nonlin least squares} Cardinality of the sample set at the finest level for \nameuno{} and \nametre{} and the full size $N$ along the iterations.}
    \label{fig: pk vs. ev.}
\end{figure}
Also, Figure \ref{fig: pk vs. ev.} shows the evolution of the cardinality $p_k$ of the sample set at the finest level along the finest iterations, for one illustrative run of the five performed. 
We can observe that \nametre{} often requires a smaller sample set, especially in the first phase of the iteration history.

Moreover, both \name{} solvers requires definitely less  weighted number of evaluations than SVRG with similar solution quality. We also remark that the reported results for SVRG are the best obtained
after a consistent tuning work for the learning rate $\alpha$, while the stepsize selection is automatic in our approach.

We now compare our method with Adagrad, imposing a maximum budget of 100 weighted function and gradient evaluations \textbf{\#f/g}
\footnote{We do not consider SVRG in this analysis, as the required budget to get an accurate solution for SVRG is much larger than 100 and therefore
the plots of its performance curves are unreadable.}.
 Figures  \ref{fig: obj. vs. ev.} and \ref{fig: acc vs. ev.} show the value of the objective function and of the classification accuracy along \textbf{\#f/g}, respectively. Here, 
 the solid curves represent the mean values and the shaded area takes into account the standard deviation.
  Figure  \ref{fig: obj. vs. ev.} shows that the objective decrease is faster for \name{} solvers than for Adagrad, while 
  Figure \ref{fig: acc vs. ev.} highlights an oscillatory behaviour of Adagrad that reaches a lower accuracy than \name{} (especially for the \aninea and \mnist datasets) 
  and comparable in case of \mush.

\begin{figure}[h!]
    \centering
    \includegraphics[width=\linewidth]{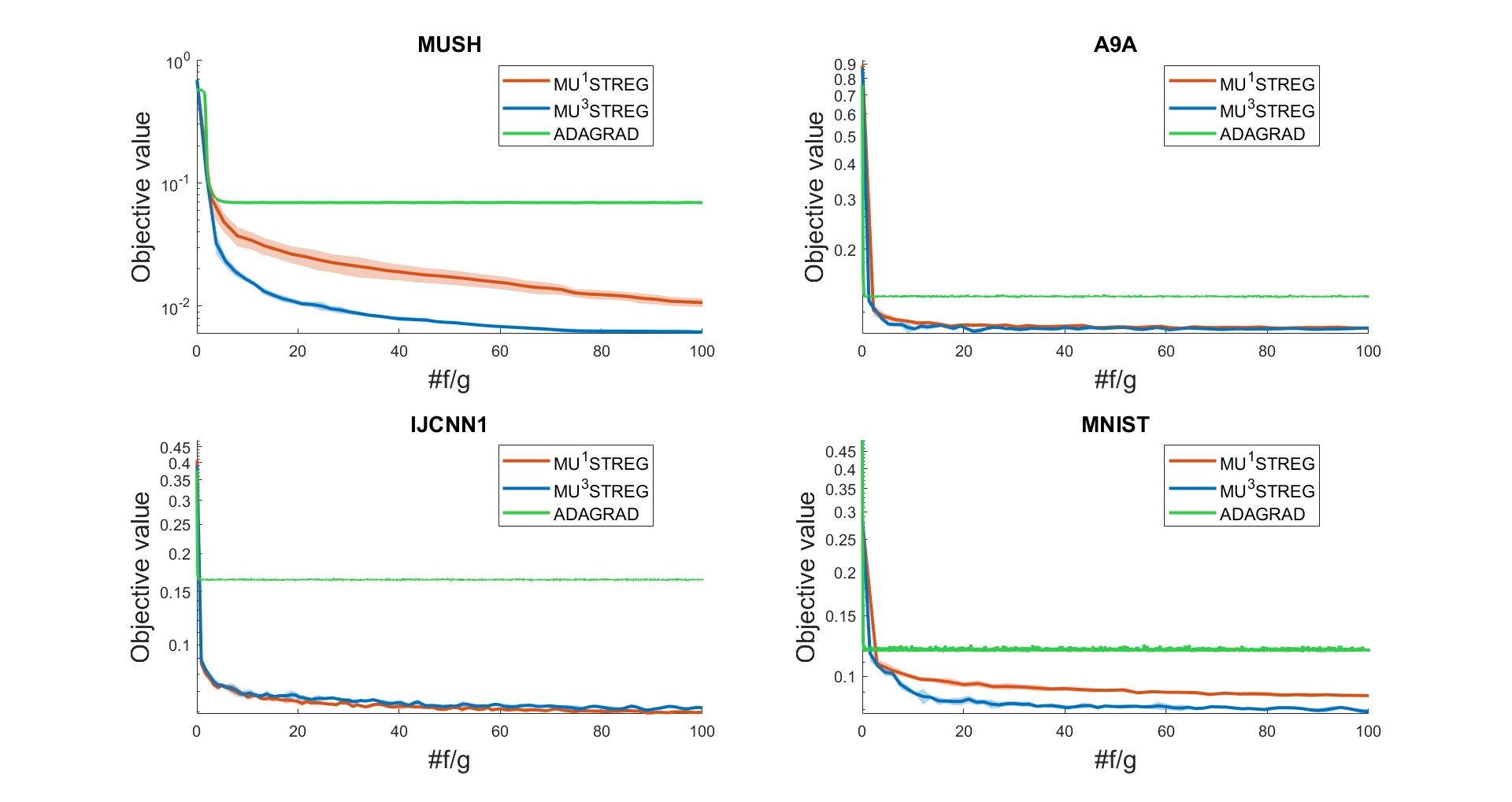}
    \caption{\eqref{prob: nonlin least squares} Objective function value along the number of  weighted evaluations of gradients and functions.}
    \label{fig: obj. vs. ev.}
\end{figure}
\begin{figure}[h!]
    \centering
    \includegraphics[width=\linewidth]{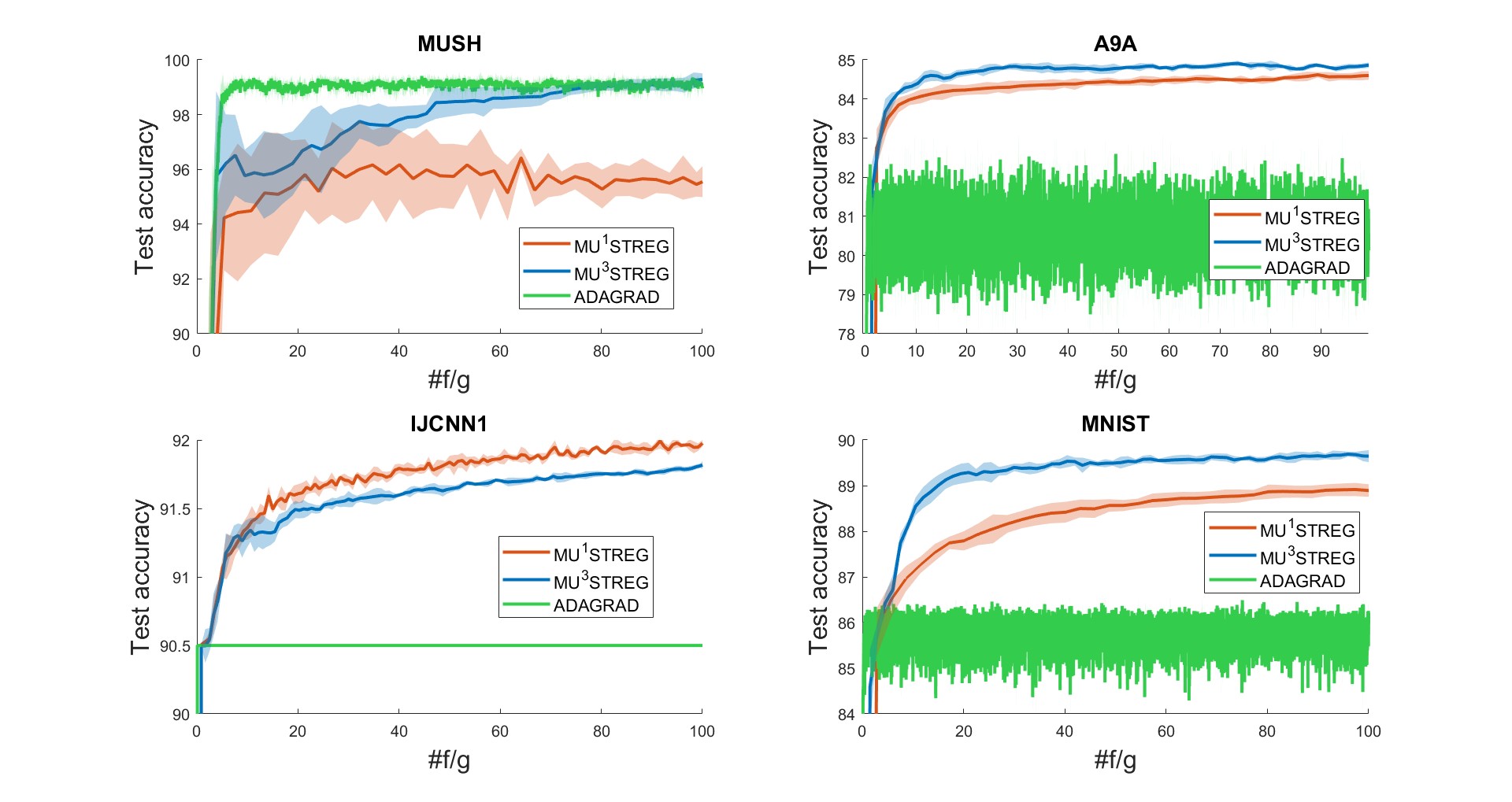}
    \caption{\eqref{prob: nonlin least squares} Classification accuracy on the testing set along the number of  weighted evaluations of gradients and functions.}
    \label{fig: acc vs. ev.}
\end{figure}


\section{Conclusions}\label{sec_concl}

We have proposed a new framework for the multilevel solution of stochastic problems, assuming that the stochastic objective function admits a hierarchical representation. Our framework encompasses both hierarchies in the variable space and in the function space, meaning that the function can be represented at different levels of accuracy.

We propose \name{}, a new multilevel stochastic gradient method based on adaptive regularization that generalizes the AR1 method \cite{book_compl} and we propose a stochastic convergence analysis for it. This convergence theory is the first stochastic convergence study for multilevel methods. 

We detailed our method for the solution of finite sum minimization problems and we made experiments on binary classification problems.
We show that the proposed multi-level method outperforms the adaptive sampling one-level counterpart and is competitive with the tested subsampling methods (Adagrad and SVRG).

In particular, the finite sum minimization setting allows us to show the main advantage  of a stochastic multilevel framework with respect to the classical deterministic one: it does not need the evaluation of the function/gradient over the full samples set along all the iterations.

Our framework covers different contexts (cf. the examples in section \ref{sec:2level}). Testing its effectiveness in other practical contexts (Montecarlo simulations, hierarchies in the variables space, notably, e.g., when random projection operators are used)  is an open research direction.

Moreover, our analysis assumes that the stochastic objective function $f$ admits a hierarchy of computable approximations, i.e., that the functions $\phi_k^\ell$, once drawn randomly, are deterministic. The development of a fully stochastic analysis, where the functions $\phi_k^\ell$ are assumed to be stochastic like $f$, is another meaningful perspective.


\section*{Acknowledgments}
{\footnotesize
The authors wish to thank the anonymous referee for his/her careful reading and suggestions, which led to significant improvement of the manuscript.  

The work of the first and second  author was partially supported by INdAM-GNCS under the INdAM-GNCS project CUP\_E53C22001930001. 
Part of the work of the F.M. was started during the author's Ph.D. thesis at the Universit\`a di Bologna supported by the program ``Programma Operativo Nazionale Ricerca e Innovazione 2014-2020 (CCI2014IT16M2OP005)" - Azione IV.5 "Dottorati e contratti di ricerca su tematiche green" XXXVII ciclo, code DOT1303154-4, CUP J35F21003200006.
The research of M.P.  was  partially granted by the Italian Ministry of University and Research (MUR) through the PRIN 2022 ``MOLE: Manifold constrained Optimization and LEarning'',  code: 2022ZK5ME7 MUR D.D. financing decree n. 20428 of Nov. 6th, 2024 (CUP B53C24006410006), and by PNRR - Missione 4 Istruzione e Ricerca - Componente C2 Investimento 1.1, Fondo per il Programma Nazionale di Ricerca e Progetti di Rilevante Interesse Nazionale (PRIN) funded by the European Commission under the NextGeneration EU programme, project ``Advanced optimization METhods for automated central veIn Sign detection in multiple sclerosis from magneTic resonAnce imaging (AMETISTA)'',  code: P2022J9SNP,
MUR D.D. financing decree n. 1379 of 1st Sept. 2023 (CUP E53D23017980001).
The work of E.R. was partially funded by the Fondation Simone et Cino Del Duca - Institut de France and  by MEPHISTO
(ANR-24-CE23-7039-01) project of the French National Agency for Research (ANR).
}

\bibliographystyle{plain}
\bibliography{biblio_Lyon}

\end{document}